%%%%%%%%%%%%%%%%%%%% author.tex %%%%%%%%%%%%%%%%%%%%%%%%%%%%%%%%%%%
%
% sample root file for your "contribution" to a contributed volume
%
% Use this file as a template for your own input.
%
%%%%%%%%%%%%%%%% Springer %%%%%%%%%%%%%%%%%%%%%%%%%%%%%%%%%%

% RECOMMENDED %%%%%%%%%%%%%%%%%%%%%%%%%%%%%%%%%%%%%%%%%%%%%%%%%%%
\documentclass[graybox]{svmult}

% choose options for [] as required from the list
% in the Reference Guide

\usepackage{mathptmx}       % selects Times Roman as basic font
\usepackage{helvet}         % selects Helvetica as sans-serif font
\usepackage{courier}        % selects Courier as typewriter font
\usepackage{type1cm}        % activate if the above 3 fonts are
                            % not available on your system
%
\usepackage{makeidx}         % allows index generation
\usepackage{graphicx}        % standard LaTeX graphics tool
                             % when including figure files
\usepackage{multicol}        % used for the two-column index
\usepackage[bottom]{footmisc}% places footnotes at page bottom

% see the list of further useful packages
% in the Reference Guide

\makeindex             % used for the subject index
                       % please use the style svind.ist with
                       % your makeindex program

%%%%%%%%%%%%%%%%%%%%%%%%%%%%%%%%%%%%%%%%%%%%%%%%%%%%%%%%%%%%%%%%%%%%%%%%%%%%%%%%%%%%%%%%%

\usepackage{amsmath}
\usepackage{amssymb}
\usepackage{verbatim}
\usepackage{graphicx}
\usepackage{xcolor}
\usepackage{fancybox}
\usepackage[square,numbers]{natbib}
\usepackage{enumitem}
\usepackage{eurosym}

% begin:packages

\usepackage{amsmath,amssymb}
\usepackage{amscd}
\usepackage{color}
\usepackage{array, multirow}
\usepackage{mathcomp}

\usepackage{graphicx}
\usepackage{bbm}
\usepackage{enumitem}
\usepackage{enumerate}
\usepackage{mathtools}

%-----------------------------hyperlinks
\usepackage{pdfsync}%
\definecolor{darkblue}{rgb}{0.0,0.0,0.7}
\RequirePackage[%
colorlinks = true,%
linkcolor = darkblue,%
citecolor = darkblue,%
urlcolor = darkblue, %
]{hyperref}%
\hypersetup{%
  pdfauthor = {Bellec, Lecu\'e, Tsybakov},%
  pdfcreator = {pdflatex},%
  pdfproducer = {pdflatex},}

\usepackage{cleveref}

\usepackage{thmtools}

%%%%%%%%%%%%%%%%%%%%%%%%%%%%%%%%%%%%%%%%%%%%%%%%%%%%%%%%%%%%%%%%%%%%%%%%%%%%%%%%%%%%%%%%%

\DeclareMathOperator*{\argmin}{argmin}
\DeclareMathOperator*{\supp}{Supp}

\def\ds1{\textrm{1\kern-0.25emI}} %{\mathds{1}}

\newcommand\R{{\mathbb R}}
\renewcommand\E{{\mathbb E}}

\newcommand{\Pro}{\mathbb{P}}

%------------------------------------------------------statistqiues

%-----------------------------------------------------couleur

\usepackage{marginnote}

% Pour desactiver les couleurs et les notes de marge, commenter les lignes ci-dessous et decommenter les lignes ci-dessous

%\newcommand{\blue}[1]{#1} 
%\newcommand{\green}[1]{#1} 
%\newcommand{\red}[1]{#1} 

%--------------------------------------nice circled with tikz

%-----------------------------------------------------beamer
%\addtobeamertemplate{footline}{\insertframenumber/\inserttotalframenumber}% % insert page number
%----------------------------------------------------mathcal

%-------------------------------------------------mathbb

%-------------------------------------------------mathfrak

%------------------------------------------------- vectors (bold)

\newcommand{\vf}{{\mathbf{f}}}
\newcommand{\vtheta}{{\boldsymbol{\theta}}}
\newcommand{\vbeta}{{\boldsymbol{\beta}}}

\newcommand{\vv}{{\mathbf{v}}}
\newcommand{\vh}{{\mathbf{h}}}
\newcommand{\ve}{{\boldsymbol{e}}}
\newcommand{\vzero}{{\mathbf{0}}}
\newcommand{\vx}{{\mathbf{x}}}
\newcommand{\vz}{{\mathbf{z}}}

\newcommand{\vu}{{\mathbf{u}}}
\newcommand{\vw}{{\boldsymbol{w}}}

\newcommand{\hmu}{\boldsymbol{\hat \mu}}

\newcommand{\vy}{\mathbf{y}}
\newcommand{\vxi}{{\boldsymbol{\xi}}}

\newcommand{\vdelta}{{\boldsymbol{\Delta}}}

\newcommand{\design}{\mathbb{X}}

\newcommand{\hbeta}{{\boldsymbol{\hat\beta}}}

\newcommand{\norma}[1]{\vert#1\vert}
\newcommand{\norm}[1]{\Vert#1\Vert}

\DeclareMathOperator{\Med}{Med}

\begin{document}

\title*{Bounds on the prediction error of penalized least squares estimators with convex penalty}
\titlerunning{Prediction error of convex penalized least squares estimators}
% Use \titlerunning{Short Title} for an abbreviated version of
% your contribution title if the original one is too long
\author{Pierre C. Bellec and Alexandre B. Tsybakov}
% Use \authorrunning{Short Title} for an abbreviated version of
% your contribution title if the original one is too long
\institute{
    Pierre C. Bellec \at ENSAE, 3 avenue Pierre Larousse, 92245 Malakoff Cedex, France    \email{pierre.bellec@ensae.fr}
    \and
    Alexandre B. Tsybakov \at ENSAE, 3 avenue Pierre Larousse, 92245 Malakoff Cedex, France \email{alexandre.tsybakov@ensae.fr}\
}
%
% Use the package "url.sty" to avoid
% problems with special characters
% used in your e-mail or web address
%
\maketitle

\abstract{
This paper considers the penalized least squares estimator with arbitrary convex penalty. When the observation noise is Gaussian, we show that the prediction error is a subgaussian random variable concentrated around its median. We apply this concentration property to derive sharp oracle inequalities for the prediction error of the LASSO, the group LASSO and the SLOPE estimators, both in probability and in expectation.   
In contrast to the previous work on the LASSO type methods, our oracle inequalities in probability are obtained at any confidence level for estimators with tuning parameters that do not depend on the confidence level. This is also the reason why we are able to establish sparsity oracle bounds in expectation for the LASSO type estimators, while the previously known techniques did not allow for the control of the expected risk. 
In addition, we show that the concentration rate in the oracle inequalities  
is better than it was commonly understood before.
%    Each chapter should be preceded by an abstract (10--15 lines long) that summarizes the content. The abstract will appear \textit{online} at \url{www.SpringerLink.com} and be available with unrestricted access. This allows unregistered users to read the abstract as a teaser for the complete chapter. As a general rule the abstracts will not appear in the printed version of your book unless it is the style of your particular book or that of the series to which your book belongs.
%    Please use the 'starred' version of the new Springer \texttt{abstract} command for typesetting the text of the online abstracts (cf. source file of this chapter template \texttt{abstract}) and include them with the source files of your manuscript. Use the plain \texttt{abstract} command if the abstract is also to appear in the printed version of the book.
}

\section{Introduction and notation}

Assume that we have noisy observations
\begin{equation}\label{1}
    y_i = f_i + \xi_i,
    \qquad
    i = 1,...,n,
\end{equation}
where $\xi_1,...,\xi_n$ are i.i.d. centered Gaussian random variables with variance $\sigma^2$, and $\vf=(f_1,...,f_n)^T\in\R^n$ is an unknown mean vector.
In vector form, the model \eqref{1} can be rewritten as $\vy = \vf + \vxi$ where $\vxi=(\xi_1,...,\xi_n)^T$ and $\vy = (y_1,...,y_n)^T$.
Let $\design\in\R^{n\times p}$ be a matrix with $p$ columns that we will call the design matrix. We consider the problem of estimation of $\vf$ by $\design \hbeta(\vy)$ where  $\hbeta(\vy)$ is an estimator valued in $\R^p$. Specifically, we restrict our attention to penalized least squares estimators of the form
\begin{equation}
    \hbeta(\vy) \in \argmin_{\vbeta\in\R^p} \left(\norm{\vy - \design\vbeta}^2 + 2 F(\vbeta)\right),
    \label{eq:def-hbeta-pen}
\end{equation}
where $\norm{\cdot}$ is the scaled Euclidean norm defined by
$\norm{\vu}^2 = \frac 1 n \sum_{i=1}^n u_i^2$ for any $\vu=(u_1,...,u_n)$, and $F:\R^p\rightarrow[0,+\infty]$ is a proper convex function, that is, a nonnegative convex function such that $F(\vbeta)<+\infty$ for at least one $\vbeta\in\R^p$. If the context prevents any ambiguity, we will omit the dependence on $\vy$ and
write $\hbeta$ for the estimator $\hbeta(\vy)$.

The object of study in this paper is  the prediction error of the estimator $\hbeta(\vy)$, that is, the value $\norm{\design\hbeta(\vy) - \vf}$. We show that, for any design matrix $\design$ and for any proper convex penalty $F$, the prediction error $\norm{\design\hbeta(\vy) - \vf}$ is a subgaussian random variable concentrated around its median and its expectation. %This generalizes the recent result of \citet{vandegeer2015concentration} who proved this property for the case of identity design matrix $\design=I_{n\times n}$. 
Furthermore, when $F$ is a norm, we obtain an explicit formula for the predictor $\design \hbeta(\vy)$ in terms of the projection on the dual ball.  Finally, we apply the subgaussian concentration property around the median to derive sharp oracle inequalities for the prediction error of the LASSO, the group LASSO and the SLOPE estimators, both in probability and in expectation.   
The inequalities in probability improve upon the previous work on the LASSO type estimators (see, e.g., the papers \cite{bickel2009simultaneous,koltchinskii2011nuclear,lptv2011,dalalyan2014prediction}
or the monographs \cite{giraud2014introduction,buhlmann2011statistics,vdgeer}) since, in contrast to the results of these works, our bounds hold at any given confidence level for estimators with tuning parameter that does not depend on the confidence level. This is also the reason why we are able to establish bounds in expectation, while the previously known techniques did not allow for the control of the expected risk. In addition, we show that the concentration rate in the oracle inequalities  
is better than it was commonly understood before.
Similar results have been obtained quite recently in \cite{bellec2016slope} by a different and somewhat more involved construction conceived specifically for the LASSO and the SLOPE estimators. The techniques of the present paper are more general since they can be used not only for these two estimators but for any penalized least squares estimators with convex penalty.

\section{Notation}

For any random variable $Z$, let $\Med[Z]$ be a median of $Z$, i.e.,
any real number $M$ such that $\mathbb P(Z\ge M) = \mathbb P (Z\le M) = 1/2$.
%Finally define 
For  a vector 
$\vu=(u_1,...,u_n)$, the sup-norm, the Euclidean norm and the $\ell_1$-norm will be denoted 
by
$\norma{\vu}_\infty = \max_{i=1,...,n} |u_i|$,
$\norma{\vu}_1 = \sum_{i=1}^n |u_i|$
and
$
\norma{\vu}_2 = ( 
    \sum_{i=1}^n u_i^2
)^{1/2}
$. The inner product in $\R^n$ is denoted by $\langle\cdot,\cdot\rangle$. We also denote by  $\supp(\vu)$ the support of $\vu$, and by $|\vu|_0$ the cardinality of $\supp(\vu)$. We denote by $I(\cdot)$ the indicator function.
For any $S\subset\{1,...,p\}$ and a vector $\vu=(u_1,...,u_p)$, we set $\vu_S=(u_jI(j\in S))_{j=1,...,p}$, and we denote by $|S|$ the cardinality of~$S$.

\section{The prediction error of convex penalized estimators is subgaussian}
\label{s:machinery}

The aim of this section is %to extend the results of \cite{vandegeer2015concentration} 
%to estimators \eqref{eq:def-hbeta-pen} with arbitrary design matrix $\design$.
to show that the prediction error $\norm{\design\hbeta(\vy) - \vf}$
is subgaussian and concentrates around its median %or its expectation
for any estimator $\hbeta(\vy)$ defined by \eqref{eq:def-hbeta-pen}.
The results of the present section will allow us to carry out a unified analysis of LASSO, group LASSO  and SLOPE estimators 
in Sections \ref{s:lasso} -- \ref{s:slope}. 

\begin{proposition}\label{prop1}
Let $\hmu:\R^n\rightarrow\R^n$ be an $L$-Lipschitz function,
that is, a function satisfying
\begin{equation}
    \label{eq:lipschitz-hmu}
    \norm{\hmu(\vy) - \hmu(\vy')} \le L \norm{\vy - \vy'},
    \qquad \forall
    \vy,\vy'\in\R^n.
\end{equation}
    Let $f(\vz) = \norm{\hmu(\vf+\sigma \vz) - \vf}$
    for some fixed $\vf\in\R^n$ and
    $\vz\sim\mathcal N(\vzero,I_{n\times n})$.
    Then,
     for all $t>0$, 
    \begin{equation}
        \label{eq:lipschitz-concentration}
            \Pro
            \left(
                f(\vz)>\Med[f(\vz)] + \frac{\sigma L t}{ \sqrt{n}}
            \right) \le 1-\Phi(t),
                       \end{equation}
where $\Phi(\cdot)$ is the $\mathcal N(0,1)$ cumulative distribution function.
%  Consequently,  for all $x>0$, 
%    \begin{equation}
%        \label{eq:lipschitz-concentration}
%            \Pro
%            \left(
%                f(\vz)>\Med[f(\vz)] + \sigma L \sqrt{\frac{2x}{n}}
%            \right) \le e^{-x}.
%                       \end{equation}
    %Let $M_f$ be either $\E[f(\vz)]$ or $\Med[f(\vz)]$.
%    Then
%    for all $x>0$ we have
%    \begin{equation}
%        \label{eq:lipschitz-concentration}
%            \Pro
%            \left(
%                f(\vz)>M_f + \sigma L \sqrt{\frac{2x}{n}}
%            \right) \le e^{-x},
%            \quad
%            \Pro
%            \left(
%                M_f > f(\vz) + \sigma L \sqrt{\frac{2x}{n}}
%            \right)
%        \le e^{-x}.
%    \end{equation}
\end{proposition}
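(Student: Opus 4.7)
The plan is to reduce the statement to the classical Gaussian concentration inequality for Lipschitz functions (Borell--Sudakov--Tsirelson). The key observation is that, although $\hmu$ is Lipschitz with respect to the scaled norm $\norm{\cdot}$, the scaling factor $1/\sqrt n$ hiding inside $\norm{\cdot}$ is what produces the factor $1/\sqrt n$ in the tail bound, so one just needs to carefully track which norm is being used where.

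First I would compute the Lipschitz constant of the random variable $f$ viewed as a deterministic function $\R^n\to\R$, with respect to the \emph{standard} Euclidean norm $\norma{\cdot}_2$. Using the triangle inequality for $\norm{\cdot}$, the Lipschitz assumption \eqref{eq:lipschitz-hmu}, and the identity $\norm{\vu}=\norma{\vu}_2/\sqrt n$, one gets for any $\vz,\vz'\in\R^n$
\begin{equation*}
    |f(\vz)-f(\vz')|
    \le \norm{\hmu(\vf+\sigma\vz)-\hmu(\vf+\sigma\vz')}
    \le L\,\norm{\sigma\vz-\sigma\vz'}
    = \frac{L\sigma}{\sqrt n}\,\norma{\vz-\vz'}_2 .
\end{equation*}
Hence $f$ is $(L\sigma/\sqrt n)$-Lipschitz on $(\R^n,\norma{\cdot}_2)$.

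Next I would invoke the Gaussian concentration inequality around the median: for any $\ell$-Lipschitz function $g:\R^n\to\R$ (with respect to $\norma{\cdot}_2$) and $\vz\sim\mathcal N(\vzero,I_{n\times n})$, one has $\Pro(g(\vz)>\Med[g(\vz)]+u)\le 1-\Phi(u/\ell)$ for every $u>0$. Applying this to $g=f$ with $\ell=L\sigma/\sqrt n$ and substituting $u=\sigma L t/\sqrt n$ produces exactly \eqref{eq:lipschitz-concentration}.

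There is no real obstacle here: the content of the proposition is essentially a bookkeeping exercise that isolates the $1/\sqrt n$ scaling of the norm $\norm{\cdot}$. The only nontrivial ingredient is the Gaussian isoperimetric/concentration inequality, which is standard and will simply be quoted.
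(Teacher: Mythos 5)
Your proposal is correct and follows essentially the same route as the paper: establish that $f$ is $(\sigma L/\sqrt n)$-Lipschitz with respect to the Euclidean norm $\norma{\cdot}_2$ via the reverse triangle inequality and the identity $\norm{\vu}=\norma{\vu}_2/\sqrt n$, then quote the Gaussian concentration inequality around the median. The paper's proof is exactly this two-line argument.
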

\begin{proof}
    \smartqed
    The result  
    follows immediately from the Gaussian concentration inequality  (cf., e.g.,  \cite[Theorem 6.2]{lifshits})
    %\cite[Theorems 5.6 and 10.17]{boucheron2013concentration})
    and the fact that $f(\cdot)$ satisfies the Lipschitz condition
    \begin{equation*}
        \left|
            f(\vu)
            -
            f(\vu')
        \right|
        \le
        \norm{\hmu(\vf+\sigma \vu) - \hmu(\vf+\sigma \vu')}
        \le \frac{\sigma L}{\sqrt n} \norma{\vu - \vu'}_2, \qquad \forall
    \vu,\vu'\in\R^n.
    \end{equation*}
        \qed
\end{proof}

We now show  that $\hmu(\vy)=\design\hbeta(\vy)$ where $\hbeta(\vy)$ is  estimator \eqref{eq:def-hbeta-pen} with any proper convex penalty satisfies the Lipschitz condition \eqref{eq:lipschitz-hmu} with $L=1$.

We first consider estimators penalized by a norm in $\R^p$, for which we get a sharper result. Namely, in this case the explicit expression for $\design\hbeta(\vy)$ is available. In addition, we get a stronger property than the Lipschitz condition \eqref{eq:lipschitz-hmu}.
Let $N:\R^p\rightarrow\R_+$ be a norm and let $N_\circ(\cdot)$
be the corresponding dual norm defined by $N_\circ(\vu) = \sup_{\vv\in\R^p: N(\vv) = 1} \vu^T\vv$.
For any $\vy\in\R^n$, define $\hbeta(\vy)$ as a solution of the following minimization problem:
\begin{equation}
    \hbeta(\vy) \in \argmin_{\vbeta\in\R^p} \left(\norm{\vy - \design\vbeta}^2 + 2 N(\vbeta)\right).
    \label{eq:def-hbeta-norm-N}
\end{equation}
The next two propositions are crucial  in proving that the concentration bounds \eqref{eq:lipschitz-concentration} apply when $f(\vz)$ is the prediction error associated with $\hbeta(\vy)$. 

\begin{proposition}\label{prop2}
Let $N:\R^p\rightarrow\R_+$ be a norm and let  $\hbeta(\vy)$ be a solution of \eqref{eq:def-hbeta-norm-N}.
    For all $\vy\in\R^n$ and all matrices $\design\in\R^{n\times p}$ , we have:\\
    (i)  
    $\design\hbeta(\vy) = \vy-P_C(\vy)$
    where  $P_C(\cdot):\R^n\rightarrow C$ 
    is the operator of projection onto the closed convex set
    $C = \{\vu\in\R^n: N_\circ(\design^T\vu) \le 1/n \}$, \\
    (ii) the function $\hmu(\vy) = \design\hbeta(\vy)$
    satisfies 
    $$
    \norm{\hmu(\vy) - \hmu(\vy')}^2 \le \norm{\vy - \vy'}^2 - \frac1n |P_C(\vy)- P_C(\vy')|_2^2.
    $$
%    $$\eqref{eq:lipschitz-hmu}
%    with $L=1$.
\end{proposition}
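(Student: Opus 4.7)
The plan is to prove (i) from the first-order optimality conditions for the convex program \eqref{eq:def-hbeta-norm-N} and to deduce (ii) from (i) via the firm non-expansiveness of the Euclidean projection onto a closed convex set.

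For (i), I would begin by writing the KKT condition: since the quadratic part is smooth and $N$ is convex, $\hbeta$ solves \eqref{eq:def-hbeta-norm-N} if and only if $\tfrac{1}{n}\design^T(\vy - \design\hbeta)\in\partial N(\hbeta)$. Using the standard description of the subdifferential of a norm, $\vg\in\partial N(\vbeta)$ iff $N_\circ(\vg)\le 1$ and $\vg^T\vbeta=N(\vbeta)$, and setting $\vu^{*}:=\vy-\design\hbeta$, this immediately gives that $\vu^{*}$ lies in $C$ and that $\langle \design\hbeta,\vu^{*}\rangle = n\,N(\hbeta)$. To conclude $\vu^{*}=P_C(\vy)$ I would verify the variational characterization of the Euclidean projection: for every $\vu\in C$,
\[
\langle \vy-\vu^{*},\,\vu-\vu^{*}\rangle \le 0.
\]
Rewriting the left-hand side as $\langle \design\hbeta,\vu\rangle - \langle \design\hbeta,\vu^{*}\rangle = \hbeta^T(\design^T\vu) - n\,N(\hbeta)$ and combining the duality bound $\hbeta^T(\design^T\vu)\le N(\hbeta)N_\circ(\design^T\vu)$ with the constraint $\vu\in C$ gives the required nonpositive upper bound.

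For (ii), once (i) is in hand, I would write $\hmu(\vy)-\hmu(\vy')=(\vy-\vy')-(P_C(\vy)-P_C(\vy'))$, expand the squared Euclidean norm, and apply the firm non-expansiveness of $P_C$,
\[
\langle P_C(\vy)-P_C(\vy'),\,\vy-\vy'\rangle \;\ge\; \norma{P_C(\vy)-P_C(\vy')}_2^{2},
\]
which itself is an immediate consequence of adding the two variational inequalities characterizing $P_C(\vy)$ and $P_C(\vy')$. The cross-term in the expansion exactly cancels the squared-projection term once and provides an extra subtraction of $\norma{P_C(\vy)-P_C(\vy')}_2^{2}$; dividing by $n$ then converts the resulting bound from $\norma{\cdot}_2^{2}$ to the scaled norm $\norm{\cdot}^2$ appearing in the statement.

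The main difficulty I expect is in part (i), namely tracking the factor of $n$ that arises because $\norm{\cdot}^{2}$ is scaled by $1/n$: the KKT condition, the definition of the set $C$, and the variational inequality characterizing $P_C(\vy)$ must be rescaled consistently so that $\vu^{*}=\vy-\design\hbeta$ really coincides with the Euclidean projection of $\vy$ onto $C$. Part (ii) is then an essentially routine algebraic computation.
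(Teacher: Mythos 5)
Your proof follows essentially the same route as the paper's: part (i) via the KKT condition, the standard description of the subdifferential of a norm, and the variational inequality characterizing the Euclidean projection, and part (ii) via firm non-expansiveness of $P_C$. The one place where you are more careful than the paper is the explicit check that $\vu^*=\vy-\design\hbeta$ actually belongs to $C$, and this is exactly where the rescaling you flag bites: the KKT condition yields $N_\circ\bigl(\design^T(\vy-\design\hbeta)\bigr)\le n$, so the radius in the definition of $C$ has to be read as $n$ rather than the printed $1/n$ for the membership (and hence the identification $\vu^*=P_C(\vy)$) to go through; with that constant fixed, your argument is complete.
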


\begin{proof}
    \smartqed
    Since $C$ is a closed convex set, we have that $\vtheta = P_C(\vy)$ if and only if 
    \begin{equation}\label{obtu}
        (\vy - \vtheta)^T(\vtheta - \vu) \ge 0
        \qquad
        \text{for all }
        \vu\in C.
    \end{equation}
 Thus, to prove statement (i) of the proposition, it is enough to check that \eqref{obtu} holds for $\vtheta = \vy - \design\hbeta(\vy)$.
  Since \eqref{obtu} is trivial when $\hbeta(\vy)=0$, we assume in what follows that $\hbeta(\vy)\ne0$. Any solution $\hbeta(\vy)$ of the convex minimization problem in \eqref{eq:def-hbeta-norm-N} satisfies
    \begin{equation}\label{moreau}
        \frac1{n}\design^T(\design\hbeta(\vy) - \vy) + \vv =\vzero
       % = \design^T(-\vtheta) + \vv,
    \end{equation}
    where $\vv$ is an element of the subdifferential of $N(\cdot)$ at $\hbeta(\vy)$. 
    Recall that the subdifferential of any norm  $N(\cdot)$ at $\hbeta(\vy)\ne 0$ is the set
    $\{\vv\in\R^p: N(\vv) = 1 \text{ and } \vv^T\hbeta(\vy) = N(\hbeta(\vy)) \}$ \cite[Section 2.6]{ATF}.
    Therefore, taking an inner product of \eqref{moreau} with $\hbeta(\vy)$ yields
    \begin{align*}
    (\design\hbeta(\vy))^T (\vy - \design\hbeta(\vy))
    %=
    %(\vy - \vtheta)^T\vtheta
    =
    nN(\hbeta(\vy))
    &=
    n \max_{\vw\in\R^p:N_\circ(\vw) = 1} \hbeta(\vy)^T\vw
    \\
    &\ge 
    \max_{\vu\in\R^n:N_\circ(\design^T\vu) = 1/n} (\design\hbeta(\vy))^T\vu= \max_{\vu\in C} (\design\hbeta(\vy))^T\vu.
    \end{align*}
    This proves \eqref{obtu} with $\vtheta = \vy - \design\hbeta(\vy)$. Thus, 
    we have established that $\design\hbeta(\vy) = \vy - P_C(\vy)$.

To prove part  (ii) of the proposition, we use that, for any closed
    convex subset $C$ of $\R^n$ and any $\vy,\vy'\in \R^n$,
    $$
     |P_C(\vy)- P_C(\vy')|_2^2 \le \langle P_C(\vy)- P_C(\vy'), \vy- \vy'\rangle,
    $$
    see, e.g., \cite[Proposition 3.1.3]{hiriart_lemarechal}.  This immediately implies 
 $$
     |\vy - P_C(\vy)- (\vy' -P_C(\vy'))|_2^2 \le  |\vy- \vy' |_2^2- |P_C(\vy)- P_C(\vy')|_2^2.
  $$   
  Part (ii) of the proposition follows now from part (i) and the last display.
     \qed
\end{proof}
    
We note that Propostion \ref{prop2} generalizes to any norm $N(\cdot)$ an analogous result obtained for the $\ell_1$-norm in \cite[Lemma 3]{tibshirani2012}.  

 We now turn to the general case assuming that $F$ is any convex penalty.

\begin{proposition}\label{prop3}
    Let $F: \R^p\rightarrow [0,+\infty]$ be any proper convex function.
    For all $\vy\in\R^n$, let $\hbeta(\vy)$ be any solution of the convex minimization problem \eqref{eq:def-hbeta-pen}.
    Then the estimator $\hmu(\vy) = \design\hbeta(\vy)$
    satisfies \eqref{eq:lipschitz-hmu}
    with $L=1$.
\end{proposition}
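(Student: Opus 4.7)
The plan is to exploit the first-order optimality condition satisfied by $\hbeta(\vy)$ and then apply the monotonicity of the subdifferential of the convex penalty $F$. Unlike in Proposition~\ref{prop2}, no explicit formula for $\hmu(\vy)$ is available, so I would reason directly at the level of the subgradient inclusion, using only convexity of $F$ and the smoothness of the quadratic data-fitting term. Morally, this argument is the standard proof that a proximity operator is non-expansive, adapted to our setting where the ``prox'' is taken through the linear map $\design$.

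First, I would write down the first-order condition at two arbitrary points $\vy,\vy'\in\R^n$. Since $\hbeta(\vy)$ minimizes the convex objective $\vbeta\mapsto \norm{\vy-\design\vbeta}^2 + 2F(\vbeta)$ and the quadratic part is everywhere finite and differentiable, the subdifferential sum rule yields
\begin{equation*}
    \tfrac1n\design^T(\vy-\design\hbeta(\vy))\in\partial F(\hbeta(\vy)),
    \qquad
    \tfrac1n\design^T(\vy'-\design\hbeta(\vy'))\in\partial F(\hbeta(\vy')).
\end{equation*}
Setting $\vd := \hmu(\vy)-\hmu(\vy') = \design(\hbeta(\vy)-\hbeta(\vy'))$ and invoking the monotonicity of $\partial F$ (i.e.\ $\langle \vg-\vg',\vbeta-\vbeta'\rangle\ge 0$ whenever $\vg\in\partial F(\vbeta)$ and $\vg'\in\partial F(\vbeta')$), the inner product of the difference of these two inclusions with $\hbeta(\vy)-\hbeta(\vy')$, followed by the identity $\langle \design^T\va,\vb\rangle=\langle\va,\design\vb\rangle$, produces
\begin{equation*}
    \langle (\vy-\vy')-\vd,\, \vd\rangle\ge 0,
    \qquad\text{i.e.,}\qquad
    \langle \vy-\vy',\vd\rangle \ge \norma{\vd}_2^{\,2} = n\norm{\vd}^2.
\end{equation*}
The Cauchy--Schwarz inequality then gives $n\norm{\vd}^2 \le \norma{\vy-\vy'}_2\norma{\vd}_2 = n\,\norm{\vy-\vy'}\,\norm{\vd}$, which collapses to $\norm{\hmu(\vy)-\hmu(\vy')}\le \norm{\vy-\vy'}$, establishing \eqref{eq:lipschitz-hmu} with $L=1$.

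The main technical point to verify will be the legitimacy of the subgradient sum rule together with the well-definedness of $\hmu(\vy)=\design\hbeta(\vy)$ when the minimizer $\hbeta(\vy)$ is not unique. The former is standard: since the smooth quadratic term is finite and differentiable everywhere on $\R^p$, no constraint qualification is needed for the decomposition $\partial(\|\vy-\design\cdot\|^2+2F)=\nabla\|\vy-\design\cdot\|^2+2\,\partial F$, and in particular $\partial F(\hbeta(\vy))$ is non-empty at every minimizer. For the latter, the objective in \eqref{eq:def-hbeta-pen} is strictly convex when viewed as a function of $\design\vbeta$, so any two minimizers produce the same prediction $\design\hbeta(\vy)$; hence $\hmu$ is well-defined even though $\hbeta$ itself may not be. With these two observations in hand the argument above applies to any proper convex $F$ and concludes the proof.
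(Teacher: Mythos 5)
Your argument is correct and is essentially the paper's own proof: both write the first-order optimality conditions at $\vy$ and $\vy'$, subtract, invoke monotonicity of $\partial F$, and finish with Cauchy--Schwarz (the paper just dispatches the trivial case $\design\hbeta(\vy)=\design\hbeta(\vy')$ explicitly before dividing by $\norma{\design\vdelta}_2$). Your added remarks on the sum rule and on the uniqueness of $\design\hbeta(\vy)$ are fine but not needed, since the statement concerns an arbitrary selection of minimizers.
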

%Proposition \ref{prop3} was established by \citet{vandegeer2015concentration} in the case of identity design, that is,
%$\design = I_{n\times n}$.
%Proposition \ref{prop3} extends the result of \cite{vandegeer2015concentration} to arbitrary design matrix $\design$.
\begin{proof}
    \smartqed
    Let $\vy,\vy'\in\R^n$.
    The case $\design\hbeta(\vy) = \design\hbeta(\vy')$ is trivial so we assume in the following that
    $\design\hbeta(\vy) \ne \design\hbeta(\vy')$.
    The optimality condition and the Moreau-Rockafellar Theorem \cite[Theorem 3.30]{peypouquet2015convex}
    yield that
    there exist an element $\vh\in\R^p$ of the subdifferential $\partial F(\hbeta(\vy))$ of $F(\cdot)$ at $\hbeta(\vy)$
    and $\vh'\in \partial F(\hbeta(\vy'))$ such that
    \begin{equation*}
       \frac1{n}   \design^T(\design\hbeta(\vy) - \vy) + \vh= \vzero, \quad \text{and} \quad 
        \frac1{n}  \design^T(\design\hbeta(\vy') - \vy') + \vh' = \vzero.
    \end{equation*}
    Taking the difference of these two equalities we obtain
    \begin{equation*}
        \design^T(\design\hbeta(\vy) - \design \hbeta(\vy')) - \design^T(\vy - \vy')
        = n (\vh' - \vh).
    \end{equation*}
    Let $\vdelta = \hbeta(\vy) - \hbeta(\vy')$. Since $F$ is a proper convex function, we have that 
    $\vdelta^T(\vh - \vh')=\langle \vh - \vh', \hbeta(\vy) - \hbeta(\vy')\rangle \ge 0$ for any $\vh\in \partial F(\hbeta(\vy))$ and any $\vh'\in \partial F(\hbeta(\vy'))$
    (see, e.g., \cite[Proposition 3.22]{peypouquet2015convex}). Therefore,
    \begin{equation*}
        \vdelta^T \design^T\design \vdelta - \vdelta^T \design^T(\vy - \vy')
        =
       n \vdelta^T(\vh' - \vh) \le 0.
    \end{equation*}
    This and the Cauchy-Schwarz inequality yield
    \begin{equation}
        \norma{\design\vdelta}^2_2
        \le 
        \vdelta^T \design^T(\vy - \vy')
        \le \norma{\design\vdelta}_2 \norma{\vy - \vy'}_2,
    \end{equation}
    which implies $\norma{\design\vdelta}_2\le \norma{\vy - \vy'}_2$ since $\design\hbeta(\vy) \ne \design\hbeta(\vy')$.
    \qed
\end{proof}
Combining the above two propositions we obtain the following theorem.

\begin{theorem}\label{cor3}
    Let $R\ge0$ be a constant and $\vf\in\R^n$.
    Assume that $\vxi\sim \mathcal N(\vzero,\sigma^2 I_{n\times n})$ and let $\vy = \vf + \vxi$.
    Let $F: \R^p\rightarrow [0,+\infty]$ be any proper convex function.
    Assume also that the estimator $\hbeta$ defined in \eqref{eq:def-hbeta-norm-N}
    satisfies 
    \begin{equation}\label{rough_bound}
        \Pro\left(\norm{\design\hbeta(\vy) - \vf} \le R\right) \ge 1/2,
     \end{equation}
     or equivalently, the median of the prediction error
     satisfies $\Med[\norm{\design\hbeta(\vy) - \vf}]\le R$.
         Then,
    for all $t>0$,     
    \begin{equation}\label{cor3_3}
        \Pro\left(\norm{\design\hbeta(\vy) - \vf} \le 
            R
            +  \frac{\sigma t}{ \sqrt{n}}
\right) \ge \Phi(t)
    \end{equation}
and consequently,
     for all $x>0$,     
    \begin{equation}\label{cor3_3a}
        \Pro\left(\norm{\design\hbeta(\vy) - \vf} \le 
            R
            + \sigma \sqrt{2 x / n}\right) \ge 1-e^{-x}.
    \end{equation}
     Furthermore,
     \begin{equation}\label{integration}
        \E \norm{\design\hbeta(\vy) - \vf}
        \le
        R
        + \frac{\sigma}{ \sqrt{2 \pi n}}.
    \end{equation}
\end{theorem}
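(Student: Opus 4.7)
The plan is to combine Propositions~\ref{prop1} and~\ref{prop3} to get the tail bound, then integrate to obtain the expectation bound. The key observation is that Theorem~\ref{cor3} is essentially a clean packaging of the two propositions together with the hypothesis \eqref{rough_bound}.

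First I would invoke Proposition~\ref{prop3} to conclude that $\hmu(\vy) = \design\hbeta(\vy)$ is $1$-Lipschitz, i.e., satisfies \eqref{eq:lipschitz-hmu} with $L = 1$. Applying Proposition~\ref{prop1} with this $\hmu$ and $L = 1$ gives, for every $t > 0$,
\begin{equation*}
\Pro\!\left(\norm{\design\hbeta(\vy) - \vf} > \Med\bigl[\norm{\design\hbeta(\vy) - \vf}\bigr] + \frac{\sigma t}{\sqrt{n}}\right) \le 1 - \Phi(t).
\end{equation*}
The hypothesis \eqref{rough_bound} is precisely $\Med[\norm{\design\hbeta(\vy) - \vf}] \le R$, so replacing the median by the larger quantity $R$ only enlarges the event on the left, which yields \eqref{cor3_3}. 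To derive \eqref{cor3_3a}, I would set $t = \sqrt{2x}$ in \eqref{cor3_3} and use the elementary Gaussian tail bound $1 - \Phi(\sqrt{2x}) \le e^{-x}$.

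For the expectation bound \eqref{integration}, let $Z = \norm{\design\hbeta(\vy) - \vf}$ and write $\E[Z] = \int_0^\infty \Pro(Z > u)\,du$. Splitting at $u = R$, bounding $\Pro(Z > u) \le 1$ on $[0,R]$, and applying \eqref{cor3_3} on $[R,\infty)$ with the substitution $u = R + \sigma t/\sqrt{n}$, I would obtain
\begin{equation*}
\E[Z] \le R + \frac{\sigma}{\sqrt{n}} \int_0^\infty (1 - \Phi(t))\,dt = R + \frac{\sigma}{\sqrt{2\pi n}},
\end{equation*}
using the standard identity $\int_0^\infty (1 - \Phi(t))\,dt = \E[g_+] = 1/\sqrt{2\pi}$ for $g \sim \mathcal{N}(0,1)$.

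There is no real obstacle: every step is routine once Propositions~\ref{prop1} and~\ref{prop3} are in hand. The only subtlety worth stating explicitly is the translation of the probabilistic hypothesis \eqref{rough_bound} into the deterministic inequality $\Med[Z] \le R$, which is what allows Proposition~\ref{prop1} (formulated around the median) to be applied with $R$ in place of $\Med[Z]$ throughout.
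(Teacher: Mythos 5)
Your proposal is correct and follows essentially the same route as the paper: Proposition~\ref{prop3} gives the $1$-Lipschitz property, Proposition~\ref{prop1} then yields concentration around the median, the hypothesis \eqref{rough_bound} replaces the median by $R$, and \eqref{cor3_3a} and \eqref{integration} follow by the Gaussian tail bound and by integrating \eqref{cor3_3}, respectively. The only difference is that you spell out the integration step (via $\int_0^\infty (1-\Phi(t))\,dt = 1/\sqrt{2\pi}$), which the paper leaves implicit.
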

\begin{proof}
    \smartqed
    Fix $\vf\in\R^n$ and let $\vz\sim\mathcal N(\vzero,I_{n\times n})$. 
    Proposition~\ref{prop3} implies that  the function 
    $f(\vz) = \norm{\design\hbeta(\vf+\sigma \vz) - \vf}$
    satisfies
    \eqref{eq:lipschitz-concentration} 
    with $L=1$ for all $x>0$. 
    Thus, we can apply Proposition~\ref{prop1} and \eqref{cor3_3a}  follows from \eqref{eq:lipschitz-concentration}. %with $M_f = \Med[f(\vz)]$. 
   The bound \eqref{cor3_3a} is a simplified version of \eqref{cor3_3} using that $1-\Phi(t)\le e^{-t^2/2}, \ \forall t>0$.
    Finally, inequality \eqref{integration} is obtained by integration of \eqref{cor3_3}.
    \qed
\end{proof}

Note that condition \eqref{rough_bound} in Theorem \ref{cor3} is a weak property. To satisfy it, is enough to have a rough bound on $\norm{\design\hbeta(\vy) - \vf}$. Of course, we would like to have a meaningful value of $R$. In the next two sections, we give examples of such $R$. Namely, we show that Theorem \ref{cor3} allows one to derive sharp oracle inequalities for the prediction risk of such estimators as LASSO, group LASSO, and SLOPE. 

\begin{remark}Along with the concentration around the median, the prediction error $\norm{\design\hbeta(\vy) - \vf}$ also concentrates around its expectation. Using the Lipschitz property of Proposition~\ref{prop1}, and the theorem about Gaussian concentration with respect to the expectation (cf., e.g., \cite[Theorem B.6]{giraud2014introduction}) we find that, if $F:\R^n\rightarrow [0,+\infty]$ is a proper convex function, 
 \begin{equation}\label{cor3_3b}
        \Pro\left(\norm{\design\hbeta(\vy) - \vf} \le 
            \E \norm{\design\hbeta(\vy) - \vf}
            + \sigma \sqrt{2 x / n}\right) \ge 1-e^{-x}
    \end{equation}
    and 
    \begin{equation}\label{cor3_3bb}
        \Pro\left(\norm{\design\hbeta(\vy) - \vf} \ge 
            \E \norm{\design\hbeta(\vy) - \vf}
            - \sigma \sqrt{2 x / n}\right) \ge 1-e^{-x}.
    \end{equation}
For the special case of identity design matrix $\design=I_{n\times n}$, these properties have been proved in
\cite{vandegeer2015concentration} where they were applied to some problems of nonparametric estimation.
However, the bounds \eqref{cor3_3b} and \eqref{cor3_3bb} dealing with the concentration around the expectation
are of no use for the purposes of the present paper since initially we have no control of the expectation. On the other hand, 
a meaningful control of the median is often easy to obtain as shown in the examples below. This is the reason why we focus on the concentration around the median.
%proved in Theorem~\ref{cor3} is very helpful.
%our argument  proved that in the Gaussian sequence model \eqref{1},
%a penalized estimator of the form
%\begin{equation*}
%    \hat\mu(\vy)
%    \in\argmin_{\vu\in\R^n}(
%        \| \vu - \vy \|^2
%        + F(\vu)
%    )
%\end{equation*}
%is Lipschitz with respect to $\vy$ in the sense that $\norm{\hat\mu(\vy) - \hat\mu(\vy')} \le \norm{\vy - \vy'}$ for all $\vy,\vy'\in\R^n$.
%By a straightforward argument based on the Gaussian concentration theorem (cf., e.g., \cite[Theorem B.6]{giraud2014introduction}), \cite{vandegeer2015concentration} observed that
%this Lipschitz property implies that the loss $\norm{\hat\mu(\vy) - \vf} $ is subgaussian.
\end{remark}

\section{Application to LASSO}
\label{s:lasso}

The LASSO is a convex regularized estimator defined by the relation
\begin{equation}
    \hbeta \in \argmin_{\vbeta\in\R^p} \left(\norm{\vy - \design\vbeta}^2 + 2\lambda \norma{\vbeta}_1 \right)
    \label{eq:def-lasso}
\end{equation}
where $\lambda>0$ is a tuning parameter.
Risk bounds for the LASSO estimator are established under some conditions on the design matrix $\design$
that measure the correlations between its columns.
The Restricted Eigenvalue (RE) condition \cite{bickel2009simultaneous} is defined as follows.
For any $S\subset\{1,...,p\}$ and $c_0> 0$,
we define the Restricted Eigenvalue constant $\kappa(S,c_0)\ge 0$ by the formula
\begin{equation}
    \kappa^2(S,c_0) \triangleq \min_{\vdelta\in\R^p: |\vdelta_{S^c}|_1\le c_0|\vdelta_{S}|_1} \frac{\norm{\design\vdelta}^2}{|\vdelta|_2^2}.
    \label{eq:def-kappa}
\end{equation}
She RE condition $RE(S,c_0)$ is said to hold if $\kappa(S,c_0) > 0$. Note that \eqref{eq:def-kappa} is slightly different from the original definition given in \cite{bickel2009simultaneous} since we have $\vdelta$ and not $\vdelta_S$ in the denominator. However, the two definitions are equivalent up to a constant depending only on $c_0$, cf. \cite{bellec2016slope}.
In terms of the Restricted Eigenvalue constant, we have the following deterministic result.
\begin{proposition}
    \label{prop:lasso-deterministic}
    Let $\lambda>0$ be a tuning parameter.
    On the event 
    \begin{equation}
        \left\{ \frac 1 n \norma{\design^T \vxi}_\infty \le \frac \lambda 2 \right\},
        \label{eq:lasso-event}
    \end{equation}
    the LASSO estimator \eqref{eq:def-lasso} with tuning parameter $\lambda$ satisfies
    \begin{equation}
        \norm{
            \design\hbeta
            -
            \vf
        }^2
        \le
        \min_{S\subset \{1,...,p\} }
               \left[
        \min_{\vbeta\in\R^p: \supp(\vbeta)=S}
             \norm{
                \design\vbeta
                -
                \vf
            }^2
            + \frac{9 |S| \lambda^2}{4\kappa^2(S,3)}
        \right]
        \label{eq:soi-squared-lasso}
    \end{equation}
    with the convention that $a/0=+\infty$ for any $a>0$.
\end{proposition}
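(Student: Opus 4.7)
The plan is to sharpen the standard ``basic inequality'' for LASSO using the fact that the data-fit term is quadratic, and then to run the classical cone argument powered by $\kappa(S,3)$. The starting point is the \emph{enhanced basic inequality}
\[
\|\vy - \design\hbeta\|^2 + \|\design(\hbeta - \vbeta)\|^2 + 2\lambda\norma{\hbeta}_1 \le \|\vy - \design\vbeta\|^2 + 2\lambda\norma{\vbeta}_1, \qquad \forall\vbeta\in\R^p,
\]
which follows by combining the exact second-order Taylor expansion of $\vbeta\mapsto\|\vy - \design\vbeta\|^2$ at $\hbeta$ with the convexity of $\norma{\cdot}_1$ along the segment $[\hbeta,\vbeta]$ and letting the interpolation parameter tend to zero. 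The extra term $\|\design(\hbeta - \vbeta)\|^2$ on the left is what will ultimately yield the leading constant $1$ in the oracle inequality.

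Setting $\vdelta = \hbeta - \vbeta$, substituting $\vy = \vf + \vxi$, and bounding the cross-term on the event \eqref{eq:lasso-event} by H\"older's inequality as $\tfrac{2}{n}\vxi^T\design\vdelta \le \lambda\norma{\vdelta_S}_1 + \lambda\norma{\vdelta_{S^c}}_1$, I reduce the claim to a purely deterministic inequality. Fixing $S$ and $\vbeta$ with $\supp(\vbeta)=S$ (so that $\vdelta_{S^c}=\hbeta_{S^c}$ and the elementary bound $\norma{\vbeta}_1 - \norma{\hbeta}_1 \le \norma{\vdelta_S}_1 - \norma{\vdelta_{S^c}}_1$ holds), these pieces combine into
\[
\|\design\hbeta - \vf\|^2 + \|\design\vdelta\|^2 + \lambda\norma{\vdelta_{S^c}}_1 \le \|\design\vbeta - \vf\|^2 + 3\lambda\norma{\vdelta_S}_1.
\]

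The proof is then closed by a dichotomy. If $\norma{\vdelta_{S^c}}_1 > 3\norma{\vdelta_S}_1$, the right-hand side already lies below $\|\design\vbeta - \vf\|^2$ and the conclusion is immediate. Otherwise $\vdelta$ sits in the $RE(S,3)$ cone, so by Cauchy--Schwarz and the definition of $\kappa(S,3)$, $\norma{\vdelta_S}_1 \le \sqrt{|S|}\,|\vdelta|_2 \le \sqrt{|S|}\,\|\design\vdelta\|/\kappa(S,3)$. The inequality $ab \le a^2 + b^2/4$ applied with $a=\|\design\vdelta\|$ and $b=3\lambda\sqrt{|S|}/\kappa(S,3)$ converts $3\lambda\norma{\vdelta_S}_1$ into $\|\design\vdelta\|^2 + \tfrac{9|S|\lambda^2}{4\kappa^2(S,3)}$; the $\|\design\vdelta\|^2$ cancels on both sides, and minimizing the resulting bound over admissible $\vbeta$ and then over $S$ gives the statement. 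The convention $a/0=+\infty$ absorbs the degenerate case $\kappa(S,3)=0$.

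The one step that has to be done carefully is the enhanced basic inequality: without the extra $\|\design\vdelta\|^2$ on the left, the AM--GM step would have to bound $\|\design\vdelta\|^2$ via $2\|\design\hbeta - \vf\|^2 + 2\|\design\vbeta - \vf\|^2$, which would inflate the leading constant in front of $\|\design\vbeta - \vf\|^2$ above $1$ and destroy the sharpness of the oracle bound. Everything else in the proof is bookkeeping.
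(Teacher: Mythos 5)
Your proof is correct and follows essentially the same route as the paper: your ``enhanced basic inequality'' is exactly Lemma~\ref{lem1} (Lemma~A.2 of \cite{bellec2016slope}), which you rederive via the interpolation argument rather than cite, and the subsequent H\"older bound, support decomposition, cone dichotomy, and the step $ab\le a^2+b^2/4$ match the paper's proof line for line. No gaps.
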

An oracle inequality as in Proposition \ref{prop:lasso-deterministic} has been first established in \cite{bickel2009simultaneous}
with a multiplicative constant greater than $1$ in front of the right hand side of \eqref{eq:soi-squared-lasso}.
The fact that this constant can be reduced to 1, so that the oracle inequality becomes sharp, is due to \cite{koltchinskii2011nuclear}. 
For the sake of completeness, we provide below a sketch of the proof of
Proposition \ref{prop:lasso-deterministic}.
\begin{proof}%[Proof of Proposition \ref{prop:lasso-deterministic}]
    \smartqed
    We will use the following fact \cite[Lemma A.2]{bellec2016slope}.
    \begin{lemma}\label{lem1}
    Let $F:\R^p\rightarrow\R$ be a convex function,
    let $\vf, \vxi\in \R^n$, $\vy=\vf+\vxi$ and let $\design$ be any $n\times p$ matrix.
    If $\hbeta$ is a solution of the minimization problem \eqref{eq:def-hbeta-pen},
        then $\hbeta$ satisfies, for all $\vbeta\in\R^p$,
    \begin{equation*}
        \norm{\design\hbeta - \vf}^2
        -
        \norm{\design\vbeta - \vf}^2
        \le
        2\left(\frac 1 n \vxi^T\design(\hbeta - \vbeta)
        + F(\vbeta)
        - F(\hbeta)\right)
        - \norm{\design(\hbeta - \vbeta)}^2.
        \label{eq:almost-sure}
    \end{equation*}
\end{lemma}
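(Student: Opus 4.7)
The plan is to derive this inequality from the first-order optimality condition for $\hbeta$, combined with the subgradient inequality for the convex penalty $F$, and then rewrite everything using a polarization identity for the prediction error.

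First, since $\hbeta$ minimizes the convex objective $\vbeta\mapsto \norm{\vy-\design\vbeta}^2 + 2F(\vbeta)$, the Moreau--Rockafellar theorem gives the existence of a subgradient $\vh\in\partial F(\hbeta)$ such that
\begin{equation*}
    \frac{1}{n}\design^T(\design\hbeta - \vy) + \vh = \vzero,
    \qquad \text{i.e.,}\qquad
    \vh = \frac{1}{n}\design^T(\vy - \design\hbeta).
\end{equation*}
This is exactly the first-order identity already used in the proof of Proposition~\ref{prop3}. Next, by convexity of $F$ and the definition of subgradient, we have for every $\vbeta\in\R^p$ that $F(\vbeta) \ge F(\hbeta) + \vh^T(\vbeta-\hbeta)$, which, after substituting the expression for $\vh$, rearranges into
\begin{equation*}
    \frac{1}{n}(\design\hbeta - \vy)^T\design(\hbeta - \vbeta) \le F(\vbeta) - F(\hbeta).
\end{equation*}

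Second, I would substitute $\vy = \vf+\vxi$ on the left-hand side to split off the noise term, producing
\begin{equation*}
    \frac{1}{n}(\design\hbeta - \vf)^T\design(\hbeta - \vbeta) \le F(\vbeta) - F(\hbeta) + \frac{1}{n}\vxi^T\design(\hbeta - \vbeta).
\end{equation*}

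Third, I would convert the inner product on the left into a difference of squared prediction errors using the elementary polarization identity
\begin{equation*}
    \norm{\vu}^2 - \norm{\vw}^2 = \frac{2}{n}\vu^T(\vu - \vw) - \norm{\vu - \vw}^2,
\end{equation*}
applied with $\vu = \design\hbeta - \vf$ and $\vw = \design\vbeta - \vf$, so that $\vu-\vw=\design(\hbeta-\vbeta)$. Multiplying the previous display by $2$ and rearranging then yields exactly the claimed bound
\begin{equation*}
    \norm{\design\hbeta - \vf}^2 - \norm{\design\vbeta - \vf}^2 \le 2\Bigl(\frac{1}{n}\vxi^T\design(\hbeta-\vbeta) + F(\vbeta) - F(\hbeta)\Bigr) - \norm{\design(\hbeta-\vbeta)}^2.
\end{equation*}

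There is essentially no hard step: the quadratic term $-\norm{\design(\hbeta-\vbeta)}^2$, which is what makes the inequality sharp (as opposed to the cruder bound obtained by directly comparing objective values at $\hbeta$ and $\vbeta$), emerges naturally from the polarization identity. The only point requiring mild care is making sure the $1/n$ factors in the normalized norm $\norm{\cdot}$ match up correctly with the standard inner products $\vxi^T\design(\hbeta-\vbeta)$ and $(\design\hbeta - \vf)^T\design(\hbeta-\vbeta)$; this is purely bookkeeping.
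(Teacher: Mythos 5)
Your proof is correct: the first-order optimality condition combined with the subgradient inequality for $F$ and the polarization identity yields exactly the stated bound, including the sharp $-\norm{\design(\hbeta-\vbeta)}^2$ term that a naive comparison of objective values would miss. The paper itself does not reprove this lemma but cites it from \cite{bellec2016slope} (Lemma A.2), where the argument is essentially the one you give, so there is nothing to add.
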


    Let $S\subset\{1,...,p\}$ and $\vbeta$ be minimizers of the right hand side of
    \eqref{eq:soi-squared-lasso} and let $\vdelta = \hbeta - \vbeta$. We will assume that $\kappa(S,3)>0$ since otherwise the claim is trivial.
    From Lemma \ref{lem1} with $F(\vbeta)=\lambda  |\vbeta|_1$ we have
    \begin{equation*}
        \norm{
            \design\hbeta
            -
            \vf
        }^2
        -
        \norm{
            \design\vbeta
            -
            \vf
        }^2
        \le 2\left( \tfrac 1 n \vxi^T\design \vdelta + \lambda |\vbeta|_1 - \lambda |\hbeta|_1\right) 
        - \norm{\design\vdelta}^2
        \triangleq D.
    \end{equation*}
    On the event \eqref{eq:lasso-event}, using the duality inequality $\vx^T\vdelta \le |\vx|_\infty |\vdelta|_1$
    valid for all $\vx,\vdelta\in\R^p$
    and the triangle inequality for the $\ell_1$-norm, we find that
    the right hand side of the previous display satisfies
    \begin{equation*}
        D
        \le
        2\lambda \left[
            \frac 1 2|\vdelta|_1   
            + |\vbeta|_1 - |\hbeta|_1
        \right] - \norm{\design\vdelta}^2
        \le 
        2\lambda \left[
            \frac 3 2 |\vdelta_S|_1   
            - \frac 1 2 |\vdelta_{S^c}|_1
        \right] - \norm{\design\vdelta}^2.
    \end{equation*}
    If $|\vdelta_{S^c}|_1 > 3|\vdelta_S|_1$ then the claim follows trivially.
    Otherwise, if $|\vdelta_{S^c}|_1 \le 3|\vdelta_S|_1$ we have
    $|\vdelta|_2 \le \norm{\design\vdelta}/\kappa(S,3)$
    and thus,
    by the Cauchy-Schwarz inequality,
    \begin{equation}\label{eq:last}
        3\lambda|\vdelta_S|_1
        \le
        3 \lambda\sqrt{|S|} |\vdelta_S|_2
        \le 
        \frac{9|S|\lambda^2}{4\kappa^2(S,3)} + \norm{\design\vdelta}^2.
    \end{equation}
    Combining the above three displays yields \eqref{eq:soi-squared-lasso}.
    \qed
\end{proof}

\begin{theorem}
    \label{thm:lasso}
    Let $p\ge 2$ and $\lambda\ge 2 \sigma \sqrt{2\log(p)/n}$.
    Assume that the diagonal elements of matrix $\frac 1 n
    \design^T \design$ are not greater than 1.
    Then for any $\delta\in(0,1)$, the LASSO estimator \eqref{eq:def-lasso} with tuning parameter $\lambda$
    satisfies
    \begin{equation}
        \norm{
            \design\hbeta
            -
            \vf
        }
        \le
        \min_{S\subset \{1,...,p\}}
         \left[
        \min_{\vbeta\in\R^p: \supp(\vbeta)=S}
        \norm{
            \design\vbeta
            -
            \vf
        }
        + \frac{3\lambda\sqrt{|S|}}{2\kappa(S,3)}  \right]
        + \frac{\sigma \Phi^{-1}(1-\delta)}{\sqrt{n}} 
        \label{eq:all-delta-lasso}
    \end{equation}
    with probability at least $1-\delta$, noting that $\Phi^{-1}(1-\delta)\le \sqrt{2\log (1/\delta)}$. Furthermore,
    \begin{equation}
      \E  \norm{
            \design\hbeta
            -
            \vf
        }
        \le
        \min_{S\subset \{1,...,p\}}        
        \left[
        \min_{\vbeta\in\R^p: \supp(\vbeta)=S}
        \norm{
            \design\vbeta
            -
            \vf
        }
        + \frac{3\lambda\sqrt{|S|}}{2\kappa(S,3)}
                \right]
        + \frac{\sigma}{ \sqrt{2 \pi n}}.
        \label{eq:all-delta-lasso-bis}
         \end{equation}
\end{theorem}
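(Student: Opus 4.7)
The plan is to combine the deterministic oracle inequality of Proposition~\ref{prop:lasso-deterministic} with the subgaussian concentration of Theorem~\ref{cor3}. The strategy has two stages: first, extract from Proposition~\ref{prop:lasso-deterministic} an upper bound on the median of the prediction error; second, invoke Theorem~\ref{cor3} to transfer that median bound into bounds at arbitrary confidence level and into a bound in expectation.

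For the first stage, let $R$ denote the bracketed oracle quantity on the right-hand side of \eqref{eq:all-delta-lasso}. On the noise event $\Omega = \{n^{-1}\norma{\design^T\vxi}_\infty \le \lambda/2\}$, Proposition~\ref{prop:lasso-deterministic} controls $\norm{\design\hbeta - \vf}^2$ by the squared analogue of the oracle quantity. Applying the subadditivity $\sqrt{a+b}\le\sqrt{a}+\sqrt{b}$ and then minimizing over $S$ and $\vbeta$ with $\supp(\vbeta) = S$ yields $\norm{\design\hbeta - \vf} \le R$ pointwise on $\Omega$, which translates into the desired median bound as soon as $\Pro(\Omega) \ge 1/2$ is established.

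The probability estimate proceeds via a union bound. Each coordinate of $n^{-1}\design^T\vxi$ is a centered Gaussian of variance at most $\sigma^2/n$, since the diagonal of $n^{-1}\design^T\design$ is bounded by one. The assumption $\lambda \ge 2\sigma\sqrt{2\log(p)/n}$ together with the exact Gaussian tail then gives $\Pro(\Omega^c) \le 2p\,\Phi(-\sqrt{2\log p})$, and a short monotonicity argument using the Mill's ratio bound $\Phi(-t)\le \phi(t)/t$ shows that this quantity is decreasing in $p$ on $p\ge 2$ and is strictly below $1/2$ at $p=2$. This is the step I expect to be the most delicate, because the threshold on $\lambda$ is tight: the cruder tail $\Phi(-t)\le e^{-t^2/2}$ would give the useless estimate $\Pro(\Omega^c)\le 2$, so one really has to invoke a sharper Gaussian tail to make the case $p=2$ work.

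Having obtained $\Med[\norm{\design\hbeta - \vf}] \le R$, I would conclude by invoking Theorem~\ref{cor3} with the proper convex penalty $F(\vbeta) = \lambda |\vbeta|_1$: taking $t = \Phi^{-1}(1-\delta)$ in \eqref{cor3_3} produces \eqref{eq:all-delta-lasso}, while \eqref{integration} directly yields the expectation bound \eqref{eq:all-delta-lasso-bis}. The simple remark $\Phi^{-1}(1-\delta)\le \sqrt{2\log(1/\delta)}$ then delivers the explicit exponential confidence form stated in the theorem.
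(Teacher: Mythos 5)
Your proposal is correct and follows essentially the same route as the paper: verify that the event $\{n^{-1}\norma{\design^T\vxi}_\infty\le\lambda/2\}$ has probability at least $1/2$ via a Gaussian tail bound and the union bound, feed the resulting median bound (via Proposition~\ref{prop:lasso-deterministic} and $\sqrt{a+b}\le\sqrt a+\sqrt b$) into Theorem~\ref{cor3}, and read off \eqref{eq:all-delta-lasso} and \eqref{eq:all-delta-lasso-bis}. The only cosmetic difference is in the small-$p$ bookkeeping: you argue that $2p\,\Phi(-\sqrt{2\log p})$ is decreasing in $p$ and below $1/2$ at $p=2$, whereas the paper applies the Mill's-ratio bound for larger $p$ and treats $p=2$ by direct evaluation; both are valid.
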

Theorem \ref{thm:lasso} is a simple consequence of Proposition \ref{prop:lasso-deterministic} and Theorem \ref{cor3}. 
Its proof is given at the end of the present section.

Previous works on the LASSO estimator established that
for some fixed $\delta_0\in (0,1)$ 
the estimator  \eqref{eq:def-lasso} with tuning parameter $\lambda= c_1 \sigma \sqrt{2\log(c_2p/\delta_0)}$, where  $c_1>1, c_2\ge 1$ are some constants, 
satisfies an oracle inequality of the form \eqref{eq:soi-squared-lasso}
with probability at least $1-\delta_0$,
see for instance \cite{bickel2009simultaneous,koltchinskii2011nuclear,dalalyan2014prediction}
or the books on high-dimensional statistics \cite{giraud2014introduction,buhlmann2011statistics,vdgeer}.
Thus, such oracle inequalities %as \eqref{eq:soi-squared-lasso}
were available only for one fixed confidence level $1-\delta_0$
tied to the tuning parameter $\lambda$. 
%This suggested that the LASSO estimator enjoys guarantees
%such as \eqref{eq:soi-squared-lasso} only for some fixed confidence level $1-\delta_0$.
Remarkably, Theorem \ref{thm:lasso} shows that the LASSO estimator with a universal (not level-dependent) tuning parameter, which can be as small as $2 \sigma \sqrt{(2\log p)/n}$, satisfies \eqref{eq:all-delta-lasso} for all confidence levels $\delta\in(0,1)$. As a consequence, we can obtain an oracle inequality \eqref{eq:all-delta-lasso-bis} for the expected error, while control of the expected error was not achievable with the previously known methods of proof. Furthermore, bounds for any moments of the prediction error can be readily obtained by integration of \eqref{eq:all-delta-lasso}. Analogous facts have been shown recently in \cite{bellec2016slope}
using different techniques.
To our knowledge, the present paper and \cite{bellec2016slope}
provide the first evidence of such properties of the LASSO estimator. 

In addition, Theorem \ref{thm:lasso} shows that the rate of concentration in the oracle inequalities   
is better than it was commonly understood before. Let $S\subset\{1,...,p\}$, $s=|S|$
and set $\delta = \exp(-2 s \lambda^2n/ \sigma^2\kappa^2(S,3) )$.
For this choice of $\delta$, Theorem \ref{thm:lasso} implies that 
if $\lambda\ge 2\sigma\sqrt{2\log(p)/n}$ then
\begin{equation*}
    \mathbb P
    \left(
        \norm{
            \design\hbeta
            -
            \vf
        }
        \le
        \min_{\vbeta\in\R^p: \supp(\vbeta)=S}
        \norm{
            \design\vbeta
            -
            \vf
        }
        + \frac{7\lambda\sqrt{|S|}}{2\kappa(S,3)}
    \right) \ge 
    1 - e^{-2 s \lambda^2 n/ \sigma^2\kappa^2(S,3) }
    .
\end{equation*}
Since the diagonal elements of $\frac 1 n \design^T\design$ are at most
1, we have $\kappa(S,3) \le 1$.
Thus, the probability on the right hand side of the last display is at least $1-p^{-16s}$.
Interestingly, this probability depends on the sparsity $s$ and tends to 1 exponentially fast as $s$ grows.
The previous proof techniques
\cite{bickel2009simultaneous,koltchinskii2011nuclear,dalalyan2014prediction,buhlmann2011statistics} 
provided, for the same type of probability, only an estimate of the form $1-p^{-b}$
for some fixed constant $b>0$ independent of $s$.

The oracle inequality \eqref{eq:soi-squared-lasso} holds for the error $\norm{\design\hbeta - \vf}$.
In order to obtain an oracle inequality for the squared error $\norm{\design\hbeta - \vf}^2$,
one can combine \eqref{eq:all-delta-lasso}
with the basic inequality $(a+b+c)^2\le 3(a^2+b^2+c^2)$.
This yields that under the assumptions of Theorem \ref{thm:lasso}, 
the LASSO estimator with tuning parameter $\lambda\ge 2\sigma\sqrt{2\log(p)/n}$ satisfies,
with probability at least $1-\delta$,
\begin{equation*}
    \norm{
        \design\hbeta
        -
        \vf
    }^2
    \le
    \min_{S\subset \{1,...,p\}}  \left[
    \min_{\vbeta\in\R^p: \supp(\vbeta)=S}
          3 \norm{
            \design\vbeta
            -
            \vf
        }^2
        + \frac{27\lambda^2|S|}{4\kappa^2(S,3)} \right]
        + \frac{6\log(1/\delta)}{n}
   .
\end{equation*}
The constant $3$ in front of the
$
\norm{
    \design\vbeta
    -
    \vf
}^2$
can be reduced to 1 using the techniques developed in \cite{bellec2016slope}.

%\begin{proof}[Proof of Theorem \ref{thm:lasso}]
{\it Proof of Theorem \ref{thm:lasso}.}
    The random variable $|\design^T\vxi|_\infty / \sqrt n$
    is the maximum of $p$ centered Gaussian random variables
    with variance at most $\sigma^2$. 
    If $\eta\sim\mathcal N(0,1)$,
    a standard approximation of the Gaussian tail gives
        $\mathbb P(|\eta|> x) \le \sqrt{2/\pi} (e^{-x^2/2}/x)$ for all $x>0$.
    This approximation with $x=\sqrt{2\log p}$ together with
    and the union bound imply
    that the event \eqref{eq:lasso-event} with $\lambda\ge 2\sigma \sqrt{(2\log p)/n}$ has probability
    at least
    $1 - 1/\sqrt{\pi\log p}$,
    which is greater than $1/2$ for all $p\ge 3$. For $p=2$, the probability of this event is bounded from below by $1-2\mathbb P(|\eta|> \sqrt{2\log 2})>1/2$. Thus, Proposition \ref{prop:lasso-deterministic} implies that condition \eqref{rough_bound} is satisfied with $R$ being the square root of the right hand side of \eqref{eq:soi-squared-lasso}.
    Let $S$ and $\vbeta$ be minimizers of the right hand side of 
    \eqref{eq:all-delta-lasso}.
    Applying Theorem \ref{cor3} and the inequality $\sqrt{a+b}\le \sqrt a + \sqrt b$ with
    $\sqrt a = \norm{\design\vbeta - \vf}$
    and
    $\sqrt b = 3\lambda\sqrt{|S|}/(2\kappa(S,3))$
    completes the proof.
%\end{proof}

\section{Application to group LASSO and related penalties}
\label{s:glasso}

The above arguments can be used to establish oracle inequalities for the group LASSO estimator similar to those obtained in Section \ref{s:lasso} for the usual LASSO. The improvements as compared to the previously known oracle bounds (see, e.g., \cite{lptv2011} or the books \cite{giraud2014introduction,buhlmann2011statistics,vdgeer}) are the same as above -- independence of the tuning parameter on the confidence level, better concentration, and derivation of bounds in expectation.  

Let $G_1,...,G_M$ be a partition of $\{1,...,p\}$. The group LASSO estimator is a solution of the convex minimization problem
\begin{equation}
    \hbeta \in \argmin_{\vbeta\in\R^p}\left(
    \norm{\vy-\design\vbeta}^2
    + 2  \lambda \sum_{k=1}^M |\vbeta_{G_k}|_2\right),
    \label{eq:def-glasso}
\end{equation}
 where $\lambda>0$ is a tuning parameter. In the following, we assume that the groups $G_k$ have the same cardinality $|G_k|= T=p/M$, $k=1,...,M$. 
 
 We will need the following group analog of the RE constant introduced in~\cite{lptv2011}. For any $S\subset\{1,...,M\}$ and $c_0> 0$,
we define the group Restricted Eigenvalue constant $\kappa_G(S,c_0)\ge 0$ by the formula
\begin{equation}
    \kappa^2_G(S,c_0) \triangleq \min_{\vdelta\in\mathcal C(S,c_0)} \frac{\norm{\design\vdelta}^2}{|\vdelta|_2^2},
    \label{eq:def-kappa-g}
\end{equation}
where $\mathcal C(S,c_0)$ is the cone
$$
\mathcal C (S,c_0)\triangleq \{\vdelta\in\R^p: \sum_{k\in S^c}|\vdelta_{G_k}|_2\le c_0\sum_{k\in S}|\vdelta_{G_k}|_2\}.
$$
Denote by $\design_{G_k}$ the $n\times |G_k|$ submatrix of $\design$ composed from all the columns of $\design$ with indices in $G_k$. For any $\vbeta\in \R^p$, set ${\mathcal K}(\vbeta)=\{k\in\{1,...,M\}: \vbeta_{G_k}\ne \vzero\}$. The following deterministic result holds. 
\begin{proposition}
    \label{prop:glasso-deterministic}
    Let $\lambda>0$ be a tuning parameter.
    On the event 
    \begin{equation}
        \left\{ \max_{k=1,...,M} \,\frac 1 n |\design_{G_k}^T \vxi |_2 \le \frac \lambda 2 \right\},
        \label{eq:glasso-event}
    \end{equation}
    the group LASSO estimator \eqref{eq:def-glasso} with tuning parameter $\lambda$ satisfies
    \begin{equation}
        \norm{
            \design\hbeta
            -
            \vf
        }^2
        \le
        \min_{S\subset \{1,...,M\} }
               \left[
        \min_{\vbeta\in\R^p: {\mathcal K}(\vbeta)=S}
             \norm{
                \design\vbeta
                -
                \vf
            }^2
            + \frac{9 |S| \lambda^2}{4\kappa^2_G(S,3)}
        \right]
        \label{eq:soi-squared-glasso}
    \end{equation}
    with the convention that $a/0=+\infty$ for any $a>0$.
\end{proposition}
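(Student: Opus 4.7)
The plan is to mirror the proof of Proposition \ref{prop:lasso-deterministic}, replacing the $\ell_1$-norm with the group norm $\vbeta\mapsto \sum_{k=1}^M |\vbeta_{G_k}|_2$ throughout and using the dual pairing between this group norm and the ``max of Euclidean norms'' norm $\vu\mapsto \max_k |\vu_{G_k}|_2$. I would fix minimizers $S\subset\{1,\dots,M\}$ and $\vbeta$ with $\mathcal K(\vbeta)=S$ of the right hand side of \eqref{eq:soi-squared-glasso}, set $\vdelta=\hbeta-\vbeta$, and assume $\kappa_G(S,3)>0$ (else the bound is vacuous).

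First I would apply Lemma \ref{lem1} with $F(\vbeta)=\lambda\sum_{k=1}^M |\vbeta_{G_k}|_2$ to obtain
\begin{equation*}
 \norm{\design\hbeta-\vf}^2-\norm{\design\vbeta-\vf}^2
 \le 2\Bigl(\tfrac1n \vxi^T\design\vdelta + \lambda\sum_k |\vbeta_{G_k}|_2-\lambda\sum_k |\hbeta_{G_k}|_2\Bigr)
 -\norm{\design\vdelta}^2.
\end{equation*}
Next I would bound the stochastic term by a block-wise duality argument: decomposing along the groups and applying Cauchy--Schwarz within each block gives
\begin{equation*}
 \tfrac 1 n \vxi^T\design\vdelta
 = \sum_{k=1}^M \tfrac1n(\design_{G_k}^T\vxi)^T\vdelta_{G_k}
 \le \Bigl(\max_{k}\tfrac1n|\design_{G_k}^T\vxi|_2\Bigr)\sum_{k}|\vdelta_{G_k}|_2,
\end{equation*}
which on the event \eqref{eq:glasso-event} is at most $(\lambda/2)\sum_{k}|\vdelta_{G_k}|_2$.

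Then I would use the triangle inequality for the group norm together with $\vbeta_{G_k}=\vzero$ for $k\in S^c$ to get $\sum_k|\vbeta_{G_k}|_2 - \sum_k |\hbeta_{G_k}|_2 \le \sum_{k\in S}|\vdelta_{G_k}|_2 - \sum_{k\in S^c}|\vdelta_{G_k}|_2$. Substituting yields
\begin{equation*}
 \norm{\design\hbeta-\vf}^2-\norm{\design\vbeta-\vf}^2
 \le 2\lambda\Bigl(\tfrac32\sum_{k\in S}|\vdelta_{G_k}|_2 - \tfrac12\sum_{k\in S^c}|\vdelta_{G_k}|_2\Bigr)-\norm{\design\vdelta}^2.
\end{equation*}
If $\sum_{k\in S^c}|\vdelta_{G_k}|_2>3\sum_{k\in S}|\vdelta_{G_k}|_2$ the right hand side is negative and the claim is immediate. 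Otherwise $\vdelta\in\mathcal C(S,3)$, so by definition of $\kappa_G(S,3)$ we have $|\vdelta|_2\le \norm{\design\vdelta}/\kappa_G(S,3)$.

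Finally, I would apply Cauchy--Schwarz across groups, $\sum_{k\in S}|\vdelta_{G_k}|_2\le \sqrt{|S|}\bigl(\sum_{k\in S}|\vdelta_{G_k}|_2^2\bigr)^{1/2}\le \sqrt{|S|}\,|\vdelta|_2$, and the elementary $2ab\le a^2+b^2$ to replace $3\lambda\sqrt{|S|}|\vdelta|_2$ by $9|S|\lambda^2/(4\kappa_G^2(S,3)) + \norm{\design\vdelta}^2$, exactly as in \eqref{eq:last}. Combining the displays produces \eqref{eq:soi-squared-glasso}. The main conceptual step is the block duality bound in the second paragraph, which replaces the $\ell_\infty/\ell_1$ duality used in the scalar LASSO case; no other step should present difficulty, since once $\vdelta$ is placed in the cone $\mathcal C(S,3)$ the group RE constant plays exactly the role of $\kappa(S,3)$ in the previous proof.
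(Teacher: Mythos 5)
Your proof is correct and follows exactly the route the paper takes: the paper's own proof of Proposition~\ref{prop:glasso-deterministic} explicitly mirrors that of Proposition~\ref{prop:lasso-deterministic}, replacing the $\ell_1$-norm by the group norm and the $\ell_\infty/\ell_1$ duality by the block-wise Cauchy--Schwarz bound you describe, then concluding via the cone condition and $\kappa_G(S,3)$ just as you do. No gaps; the only difference is that you spell out the final Cauchy--Schwarz-across-groups step, which the paper leaves implicit as ``analogous.''
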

\begin{proof} We follow the same lines as in the proof of Proposition~\ref{prop:lasso-deterministic}. The difference is that 
we replace the $\ell_1$ norm by the group LASSO norm $\sum_{k=1}^M |\vbeta_{G_k}|_2$,   and the value $D$ now has the form
 \begin{eqnarray*} 
 D&=&2\left( \tfrac 1 n \vxi^T\design \vdelta + \lambda\sum_{k=1}^M |\vbeta_{G_k}|_2 - \lambda \sum_{k=1}^M |\hbeta_{G_k}|_2\right) 
        - \norm{\design\vdelta}^2.
         \end{eqnarray*}
            Then, on the event \eqref{eq:glasso-event} we obtain
             \begin{eqnarray*}
        D&\le& 2\lambda\left( \frac 1 2 \sum_{k=1}^M |\vdelta_{G_k}|_2 + \sum_{k=1}^M |\vbeta_{G_k}|_2 -\sum_{k=1}^M |\hbeta_{G_k}|_2\right) 
        - \norm{\design\vdelta}^2\\
        &\le& 2\lambda\left( \frac 3 2 \sum_{k\in S} |\vdelta_{G_k}|_2  -\frac 1 2 \sum_{k\in S^c}|\vdelta_{G_k}|_2 \right) 
        - \norm{\design\vdelta}^2,
           \end{eqnarray*}
           where the last inequality uses the fact that ${\mathcal K}(\vbeta)=S$. The rest of the proof is quite analogous to that of Proposition~\ref{prop:lasso-deterministic} if we replace there
$\kappa(S,3)$ by $\kappa_G(S,3)$.
\qed
            \end{proof}
            
            To derive the oracle inequalities for group LASSO, we use the same argument as in the case of LASSO. In order to apply Theorem~\ref{cor3},
we need to find a ``weak bound" $R$ on the error $\norm{\design\hbeta - \vf}$, i.e., a bound valid
with probability at least $1/2$. The next lemma gives a range of values of $\lambda$ such that the event \eqref{eq:glasso-event} holds with probability at least 1/2.  Then, due to Proposition~\ref{prop:glasso-deterministic}, we can take as $R$ the square root of the right hand side of \eqref{eq:soi-squared-glasso}. 

Denote by $\|\design_{G_k}\|_{\rm sp}\triangleq \sup_{|\vx|_2\le 1}|\design_{G_k}\vx|_2$ the spectral norm of matrix $\design_{G_k}$, and set $\psi^*  =\max_{k=1,...,M}\|\design_{G_k}\|_{\rm sp}/\sqrt{n}$.

\begin{lemma} \label{lem:group}
Let the diagonal elements of matrix $\frac 1 n
    \design^T \design$ be not greater than 1. If 
\begin{equation}\label{lambda:glasso}
\lambda \ge \frac{2\sigma}{\sqrt{n}}\left(\sqrt{T} + \psi^* \sqrt{2\log(2 M)}\right),
\end{equation}  
then the event \eqref{eq:glasso-event} has probability at least 1/2.
\end{lemma}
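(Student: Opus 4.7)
The plan is to control $\max_{k=1,...,M} \tfrac 1 n | \design_{G_k}^T \vxi|_2$ by combining a Jensen-type bound on the expectation for each group with the Gaussian concentration inequality applied to the Lipschitz map $\vxi \mapsto |\design_{G_k}^T \vxi|_2$, and then take a union bound over the $M$ groups. Since we only need the event to have probability at least $1/2$, we can afford to spend roughly $1/(2M)$ of the failure probability on each group.

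First, I fix $k\in\{1,\ldots,M\}$ and bound $\E|\design_{G_k}^T\vxi|_2$. By Jensen's inequality,
\begin{equation*}
\E|\design_{G_k}^T\vxi|_2
\le \bigl(\E|\design_{G_k}^T\vxi|_2^2\bigr)^{1/2}
= \sigma\sqrt{\operatorname{tr}(\design_{G_k}^T\design_{G_k})}.
\end{equation*}
Under the assumption that the diagonal entries of $\tfrac 1 n \design^T\design$ are at most $1$, each diagonal entry of $\design_{G_k}^T\design_{G_k}$ is at most $n$, hence $\operatorname{tr}(\design_{G_k}^T\design_{G_k})\le Tn$, giving $\E|\design_{G_k}^T\vxi|_2\le \sigma\sqrt{Tn}$.

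Next, writing $\vxi=\sigma\vz$ with $\vz\sim\mathcal N(\vzero,I_{n\times n})$, the map $\vz\mapsto \sigma|\design_{G_k}^T\vz|_2$ is Lipschitz from $(\R^n,|\cdot|_2)$ with Lipschitz constant $\sigma \|\design_{G_k}\|_{\rm sp}\le \sigma\psi^*\sqrt{n}$. By Gaussian concentration around the expectation,
\begin{equation*}
\Pro\Bigl(|\design_{G_k}^T\vxi|_2 > \sigma\sqrt{Tn} + \sigma\psi^*\sqrt{n}\, t\Bigr)
\le e^{-t^2/2}, \qquad \forall t>0.
\end{equation*}
Choosing $t=\sqrt{2\log(2M)}$ makes the right-hand side equal to $1/(2M)$. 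A union bound over $k=1,\ldots,M$ then yields
\begin{equation*}
\Pro\Bigl(\max_{k=1,\ldots,M}|\design_{G_k}^T\vxi|_2 \le \sigma\sqrt{Tn} + \sigma\psi^*\sqrt{n}\sqrt{2\log(2M)}\Bigr)\ge \tfrac 1 2.
\end{equation*}
Dividing by $n$ and comparing with the hypothesis on $\lambda$ gives the event \eqref{eq:glasso-event} with probability at least $1/2$.

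There is no real obstacle: the main thing to get right is the control of the expectation via the trace bound (which relies on the normalization of the diagonal of $\tfrac 1 n \design^T\design$) and keeping track of the fact that the Gaussian concentration constant involves $\|\design_{G_k}\|_{\rm sp}$ rather than $\sqrt{\operatorname{tr}(\design_{G_k}^T\design_{G_k})}$, which is precisely why both terms $\sqrt{T}$ and $\psi^*\sqrt{2\log(2M)}$ appear additively in \eqref{lambda:glasso}.
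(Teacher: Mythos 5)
Your proof is correct and follows essentially the same route as the paper: a Jensen/Frobenius-norm bound $\E|\design_{G_k}^T\vxi|_2\le\sigma\sqrt{Tn}$ (your trace is exactly $\|\design_{G_k}\|_F^2$), Gaussian concentration for the $\sigma\psi^*\sqrt{n}$-Lipschitz map, and a union bound at level $1/(2M)$ per group. No differences worth noting.
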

\begin{proof} Note that the function $\vu\mapsto |\design_{G_k}\vu|_2$ is $\psi^*\sqrt{n}$-Lipschitz with respect to the Euclidean norm. Therefore, the Gaussian concentration inequality, cf., e.g., \cite[Theorem B.6]{giraud2014introduction}, implies that, for all $x>0$,  
$$
\Pro \left( |\design_{G_k}\vxi|_2 \ge \E |\design_{G_k}\vxi|_2 + \sigma\psi^*\sqrt{2xn} \right)\le e^{-x}, \qquad k=1,...,M. 
$$
Here, $ \E |\design_{G_k}\vxi|_2\le \left( \E |\design_{G_k}\vxi|_2^2\right)^{1/2}=\sigma \|\design_{G_k}\|_F$, where $\|\cdot\|_F$ is the Frobenius norm. By the assumption of the lemma, all columns of $\design$ have the Euclidean norm at most~$\sqrt{n}$. Since $\design_{G_k}$ is composed from $T$ columns of  $\design$ we have $\|\design_{G_k}\|_F^2 \le nT$, so that $ \E |\design_{G_k}\vxi|_2\le\sigma \sqrt{nT}$ for $k=1,...,M$. Thus, for all $x>0$, 
$$
\Pro \left( |\design_{G_k}\vxi|_2 \ge \sigma (\sqrt{nT} +\psi^*\sqrt{2xn}) \right)\le e^{-x}, \qquad k=1,...,M,
$$
and the result of the lemma follows by application of the union bound.
\qed
\end{proof}

Combining Proposition~\ref{prop:glasso-deterministic}, Lemma~\ref{lem:group} and Theorem~\ref{cor3} we get the following result. 

\begin{theorem}
    \label{thm:glasso}
    Assume that the diagonal elements of matrix $\frac 1 n
    \design^T \design$ are not greater than 1. Let $\lambda$ be such that \eqref{lambda:glasso} holds.
    Then for any $\delta\in(0,1)$, the group LASSO estimator \eqref{eq:def-glasso} with tuning parameter $\lambda$
    satisfies, with probability at least $1-\delta$, the oracle inequality \eqref{eq:all-delta-lasso} with $\kappa(S,3)$ replaced by $\kappa_G(S,3)$, and $\supp(\vbeta)$ replaced by ${\mathcal K}(\vbeta)$. Furthermore, it satisfies \eqref{eq:all-delta-lasso-bis} with the same modifications.
%    \begin{equation}
%        \norm{
%            \design\hbeta
%            -
%            \vf
%        }
%        \le
%        \min_{S\subset \{1,...,p\} }
%               \left[
%        \min_{\vbeta\in\R^p: {\mathcal K}(\vbeta)=S}
%             \norm{
%                \design\vbeta
%                -
%                \vf
%            }
%            + \frac{3 \sqrt{|S|} \lambda}{2\kappa_G(S,3)}
%        \right]
%        + \frac{\sigma \Phi^{-1}(1-\delta)}{\sqrt{n}} 
%        \label{eq:all-delta-glasso}
%    \end{equation}
%    with probability at least $1-\delta$. Furthermore,
%    \begin{equation}
%      \E  \norm{
%            \design\hbeta
%            -
%            \vf
%        }
%        \le
%        \min_{S\subset \{1,...,p\} }
%               \left[
%        \min_{\vbeta\in\R^p: {\mathcal K}(\vbeta)=S}
%             \norm{
%                \design\vbeta
%                -
%                \vf
%            }
%            + \frac{3 \sqrt{|S|} \lambda}{2\kappa_G(S,3)}
%        \right]
%        + \frac{\sigma}{ \sqrt{2 \pi n}}.
%        \label{eq:all-delta-glasso-bis}
%         \end{equation}
\end{theorem}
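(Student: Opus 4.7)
The plan is to follow verbatim the argument used for Theorem \ref{thm:lasso} at the end of Section \ref{s:lasso}, but with the three group-LASSO ingredients: the deterministic squared oracle inequality of Proposition \ref{prop:glasso-deterministic}, the lower bound on the probability of the event \eqref{eq:glasso-event} given by Lemma \ref{lem:group}, and the Gaussian concentration of the prediction error around its median provided by Theorem \ref{cor3}. Since the group-LASSO penalty $F(\vbeta) = \lambda \sum_{k=1}^M |\vbeta_{G_k}|_2$ is a norm, and hence a proper convex function on $\R^p$, the Lipschitz property of Proposition \ref{prop3} applies to $\hmu(\vy)=\design\hbeta(\vy)$, so that Theorem \ref{cor3} is available in this setting.

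First I would combine Lemma \ref{lem:group} and Proposition \ref{prop:glasso-deterministic}: under the assumption \eqref{lambda:glasso}, the event \eqref{eq:glasso-event} has probability at least $1/2$, and on this event the estimator satisfies the deterministic bound \eqref{eq:soi-squared-glasso}. Letting $R$ denote the square root of the right-hand side of \eqref{eq:soi-squared-glasso}, this yields $\Pro(\norm{\design\hbeta - \vf}\le R)\ge 1/2$, which is precisely hypothesis \eqref{rough_bound} of Theorem \ref{cor3}. Equivalently, $\Med[\norm{\design\hbeta-\vf}]\le R$.

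Next I would invoke Theorem \ref{cor3}, which delivers the sub-Gaussian concentration estimates \eqref{cor3_3a} and \eqref{integration} for the prediction error of the group LASSO with $R$ as above. To convert these into the claimed form, fix $S\subset\{1,\dots,M\}$ and $\vbeta \in \R^p$ with $\mathcal K(\vbeta)=S$ as the minimizers of the right-hand side of the target oracle inequality. Apply the subadditivity $\sqrt{a+b}\le \sqrt a + \sqrt b$ with $\sqrt a = \norm{\design\vbeta - \vf}$ and $\sqrt b = 3\lambda\sqrt{|S|}/(2\kappa_G(S,3))$ to the square-rooted \eqref{eq:soi-squared-glasso}, and then substitute the resulting upper bound for $R$ in \eqref{cor3_3a} and \eqref{integration}. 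This produces the probability oracle bound corresponding to \eqref{eq:all-delta-lasso} and the expectation bound corresponding to \eqref{eq:all-delta-lasso-bis}, with $\kappa(S,3)$ replaced by $\kappa_G(S,3)$ and $\supp(\vbeta)$ replaced by $\mathcal K(\vbeta)$, as required.

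I do not anticipate a real obstacle, since all the nontrivial work has already been carried out in Proposition \ref{prop:glasso-deterministic}, Lemma \ref{lem:group}, and Theorem \ref{cor3}. The only routine point of care is the order of operations at the end: one must pass from the squared oracle inequality \eqref{eq:soi-squared-glasso} to a non-squared one via $\sqrt{a+b}\le \sqrt a + \sqrt b$ and only then take the minimum over $(S,\vbeta)$, exactly as in the proof of Theorem \ref{thm:lasso}.
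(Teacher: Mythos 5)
Your proposal is correct and follows exactly the paper's route: the paper proves Theorem \ref{thm:glasso} by the one-line combination of Proposition \ref{prop:glasso-deterministic}, Lemma \ref{lem:group} and Theorem \ref{cor3}, mirroring the proof of Theorem \ref{thm:lasso} (weak bound $R$ from the squared oracle inequality on an event of probability at least $1/2$, then concentration around the median, then $\sqrt{a+b}\le\sqrt a+\sqrt b$). No gaps.
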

%The proof is similar to that of Theorem \ref{thm:lasso},
%    and thus it is omitted.

We can consider in a similar way a more general class of penalties generated by cones \cite{micchelli}. Let ${\mathcal A}$ be a convex cone in $(0,+\infty)^p$. For any $\beta\in \R^p$, set
\begin{equation}
\|\beta\|_{\mathcal A} \triangleq \inf_{a\in {\mathcal A}} \frac12 \sum_{j=1}^p\left(\frac{\beta_j^2}{a_j} +a_j\right)= \inf_{a\in {\mathcal A}: |a|_1\le 1}\sqrt{\sum_{j=1}^p\frac{\beta_j^2}{a_j}} 
 \label{eq:cone-penalty}
         \end{equation}
and consider the penalty $F(\beta)=\lambda \|\beta\|_{\mathcal A}$ where $\lambda>0$. The function $\|\cdot\|_{\mathcal A}$ is convex since it is a minimum of a convex function of the couple $(\beta,a)$ over $a$ in a convex set \cite[Corollary 2.4.5]{hiriart_lemarechal}. In view of its positive homogeneity, it is also a norm.
The group LASSO penalty is a special case of \eqref{eq:cone-penalty} corresponding to the cone of all vectors $a$ with positive components that are constant on the blocks $G_k$ of the partition. Many other interesting examples are given in \cite{micchelli,maurer_pontil}, see also \cite[Section 6.9]{vdgeer}. 

Such penalties induce a class of admissible sets of indices $S\subset \{1,...,p\}$. This is a generalization of the sets of indices corresponding to vectors $\beta$ that vanish on entire blocks in the case of group LASSO.  Roughly speaking, the set of indices $S$ would be suitable for our construction if, for any $a\in {\mathcal A}$, the vectors $a_S$ and $a_{S^c}$ belong to ${\mathcal A}$. 
However, this is not possible since, by definition, the elements of ${\mathcal A}$ must have positive components. Thus, we slightly modify this condition on~$S$. A~set $S\subset \{1,...,p\}$ will be called {\it admissible} with respect to ${\mathcal A}$ if, for any $a\in {\mathcal A}$ and any $\epsilon>0$, there exist vectors $b_{S^c}\in \R^p$ and $b_{S}\in \R^p$ supported on $S^c$ and $S$ respectively with all components in $(0,\epsilon)$ and such that 
$a_S+b_{S^c}\in {\mathcal A}$, and $a_{S^c}+b_{S}\in {\mathcal A}$. 

The following lemma shows that, for admissible $S$, the norm $\|\cdot\|_{\mathcal A}$ has the same decomposition property as the $\ell_1$ norm. 

\begin{lemma}\label{lem2}
If $S\subset \{1,...,p\}$ is an {\it admissible} set of indices with respect to ${\mathcal A}$, then 
$$
\|\beta\|_{\mathcal A} = \|\beta_S\|_{\mathcal A} + \|\beta_{S^c}\|_{\mathcal A}.
$$
\end{lemma}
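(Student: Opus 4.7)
The plan is to prove the two inequalities separately. The direction $\|\beta\|_{\mathcal A}\le \|\beta_S\|_{\mathcal A}+\|\beta_{S^c}\|_{\mathcal A}$ is immediate: since $\|\cdot\|_{\mathcal A}$ is a norm (as already observed in the text) and $\beta=\beta_S+\beta_{S^c}$, the triangle inequality gives this bound at once. So the work is entirely in the reverse inequality, for which the admissibility hypothesis is tailor-made.

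For $\|\beta\|_{\mathcal A}\ge \|\beta_S\|_{\mathcal A}+\|\beta_{S^c}\|_{\mathcal A}$, I would use admissibility to convert a near-optimal $a$ for $\beta$ into valid test vectors for $\beta_S$ and $\beta_{S^c}$ separately. Fix $a\in\mathcal A$ and $\epsilon>0$. By admissibility, pick $b_{S^c}$ supported on $S^c$ and $b_{S}$ supported on $S$, with (non-zero) components in $(0,\epsilon)$, such that $\tilde a\triangleq a_S+b_{S^c}\in\mathcal A$ and $\tilde a'\triangleq a_{S^c}+b_{S}\in\mathcal A$. Using $\tilde a$ as a test point in the definition of $\|\beta_S\|_{\mathcal A}$, the sum splits by index set: on $S$ we have $(\beta_S)_j=\beta_j$ and $\tilde a_j=a_j$, giving the term $\beta_j^2/a_j + a_j$; on $S^c$ we have $(\beta_S)_j=0$, so the quadratic term vanishes and only $\tilde a_j<\epsilon$ contributes. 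The same accounting with $\tilde a'$ applies to $\|\beta_{S^c}\|_{\mathcal A}$, swapping the roles of $S$ and $S^c$.

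Summing the two bounds yields
\begin{equation*}
  \|\beta_S\|_{\mathcal A}+\|\beta_{S^c}\|_{\mathcal A}
  \le \frac12\sum_{j=1}^{p}\left(\frac{\beta_j^2}{a_j}+a_j\right)+\frac{p\,\epsilon}{2}.
\end{equation*}
Taking the infimum over $a\in\mathcal A$ on the right and then letting $\epsilon\downarrow 0$ gives the desired inequality, and combined with the triangle bound from the first paragraph this proves the claimed identity. The main subtlety, and the reason the admissibility hypothesis is formulated with strictly positive perturbations rather than by just setting $a_j=0$ off $S$, is precisely that $\mathcal A\subset(0,+\infty)^p$, so one cannot take $a_S$ directly as a test vector; the perturbation $b_{S^c}\in(0,\epsilon)^{|S^c|}$ is harmless because the matching numerator $(\beta_S)_j^2$ is zero on $S^c$, killing the otherwise dangerous small denominator.
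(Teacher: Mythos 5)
Your proof is correct and follows essentially the same route as the paper's: both reduce to the inequality $\|\beta\|_{\mathcal A}\ge\|\beta_S\|_{\mathcal A}+\|\beta_{S^c}\|_{\mathcal A}$ and use admissibility to pad $a_S$ (resp.\ $a_{S^c}$) by a perturbation with components in $(0,\epsilon)$ into a valid test vector, the extra $\sum_j a_j$ contribution being $O(\epsilon)$. The only difference is organizational: the paper first splits the infimum over the two index sets and then identifies each partial infimum with the corresponding norm, while you keep a single $a$ fixed, bound each norm separately, and sum before taking the infimum and letting $\epsilon\downarrow 0$ --- the admissibility step is identical.
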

\begin{proof}
As $\|\cdot\|_{\mathcal A} $ is a norm, we have to show only that $
\|\beta\|_{\mathcal A} \ge \|\beta_S\|_{\mathcal A} + \|\beta_{S^c}\|_{\mathcal A}.
$ Obviously, 
\begin{equation}
\|\beta\|_{\mathcal A} \ge \inf_{a\in {\mathcal A}} \frac12 \sum_{j\in S}\left(\frac{\beta_j^2}{a_j} +a_j\right)+\inf_{a\in {\mathcal A}} \frac12 \sum_{j\in S^c}\left(\frac{\beta_j^2}{a_j} +a_j\right).
 \label{eq:lem1}
         \end{equation}
Since $S$ is admissible, adding the sum $\sum_{j\in S^c}a_j$ under the infimum in the first term on the right hand side does not change the result:
\begin{eqnarray*}
\inf_{a\in {\mathcal A}} \frac12 \sum_{j\in S}\left(\frac{\beta_j^2}{a_j} +a_j\right) = \inf_{a\in {\mathcal A}} \frac12 \left[\sum_{j\in S}\left(\frac{\beta_j^2}{a_j} +a_j\right) + \sum_{j\in S^c}a_j \right] =  \|\beta_S\|_{\mathcal A}.
\end{eqnarray*}
The second term on the right hand side of \eqref{eq:lem1} is treated analogously.
\qed
\end{proof}
Next, for any $S\subset\{1,...,p\}$ and $c_0> 0$,
we need an analog of the RE constant corresponding to the penalty $\|\cdot\|_{\mathcal A}$, cf. \cite{vdgeer}. We define
$q_{\mathcal A}(S,c_0)\ge 0$ by the formula
\begin{equation}
    q^2_{\mathcal A}(S,c_0) \triangleq \min_{\vdelta\in\mathcal C'(S,c_0)} \frac{\norm{\design\vdelta}^2}{\|\vdelta_S\|_{\mathcal A}^2},
    \label{eq:def-kappa-cone}
\end{equation}
where $\mathcal C'(S,c_0)$ is the cone
$$
\mathcal C' (S,c_0)\triangleq \{\vdelta\in\R^p: \|\vdelta_{S^c}\|_{\mathcal A}\le c_0\|\vdelta_{S}\|_{\mathcal A}\}.
$$
As in the previous examples, our starting point will be  a deterministic bound that holds on a suitable event. This result  is analogous to Propositions \ref{prop:lasso-deterministic} and \ref{prop:glasso-deterministic}. To state it, we define 
$$
\|\vbeta\|_{\mathcal A, \circ} = \sup_{a\in {\mathcal A}: |a|_1\le 1}\sqrt{\sum_{j=1}^p a_j \beta_j^2} 
$$
which is the dual norm to $\|\cdot\|_{\mathcal A}$. 
\begin{proposition}
    \label{prop:cone-deterministic}
    Let ${\mathcal A}$ be a convex cone in $(0,+\infty)^p$, and let ${\mathbb S}_{\mathcal A}$ be set of all $S\subset \{1,...,p\}$ that are admissible with respect to ${\mathcal A}$.
    Let $\lambda>0$ be a tuning parameter.
    On the event 
    \begin{equation}
        \left\{\|\tfrac 1 n \design^T \vxi\|_{\mathcal A, \circ}  \le \frac \lambda 2 \right\},
        \label{eq:cone-event}
    \end{equation}
    the estimator \eqref{eq:def-hbeta-pen} with penalty $F(\cdot)=\lambda\|\cdot\|_{\mathcal A}$ satisfies
    \begin{equation}
        \norm{
            \design\hbeta
            -
            \vf
        }^2
        \le
        \min_{S\in {\mathbb S}_{\mathcal A}}
               \left[
        \min_{\vbeta\in\R^p: \supp (\vbeta)=S}
             \norm{
                \design\vbeta
                -
                \vf
            }^2
            + \frac{9 \lambda^2}{4 q^2_{\mathcal A}(S,3)}
        \right]
        \label{eq:soi-squared-cone}
    \end{equation}
    with the convention that $a/0=+\infty$ for any $a>0$.
\end{proposition}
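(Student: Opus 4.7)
The plan is to mimic the proofs of Propositions \ref{prop:lasso-deterministic} and \ref{prop:glasso-deterministic}, with the $\ell_1$ norm (respectively the group LASSO norm) replaced everywhere by $\|\cdot\|_{\mathcal A}$, the duality with $\|\cdot\|_\infty$ (respectively $\max_k|\cdot|_2$) replaced by duality with $\|\cdot\|_{\mathcal A,\circ}$, and with the ordinary decomposition $|\vbeta|_1=|\vbeta_S|_1+|\vbeta_{S^c}|_1$ replaced by the decomposition furnished by Lemma \ref{lem2} for admissible $S$.

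More precisely, I would fix an admissible $S\in{\mathbb S}_{\mathcal A}$ and $\vbeta\in\R^p$ supported on $S$, and set $\vdelta=\hbeta-\vbeta$. Applying Lemma \ref{lem1} with $F(\cdot)=\lambda\|\cdot\|_{\mathcal A}$ gives
\begin{equation*}
\norm{\design\hbeta-\vf}^2-\norm{\design\vbeta-\vf}^2\le 2\Bigl(\tfrac 1 n\vxi^T\design\vdelta+\lambda\|\vbeta\|_{\mathcal A}-\lambda\|\hbeta\|_{\mathcal A}\Bigr)-\norm{\design\vdelta}^2\triangleq D.
\end{equation*}
For the stochastic term I would use the definition of the dual norm to write $\tfrac 1 n\vxi^T\design\vdelta=\langle\tfrac 1 n\design^T\vxi,\vdelta\rangle\le \|\tfrac 1 n\design^T\vxi\|_{\mathcal A,\circ}\,\|\vdelta\|_{\mathcal A}$, which on the event \eqref{eq:cone-event} is bounded by $(\lambda/2)\|\vdelta\|_{\mathcal A}$. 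Using Lemma \ref{lem2}, $\|\vdelta\|_{\mathcal A}=\|\vdelta_S\|_{\mathcal A}+\|\vdelta_{S^c}\|_{\mathcal A}$. For the penalty difference, since $\vbeta$ is supported on $S$, the triangle inequality combined with Lemma \ref{lem2} gives
\begin{equation*}
\|\vbeta\|_{\mathcal A}-\|\hbeta\|_{\mathcal A}=\|\vbeta\|_{\mathcal A}-\|\hbeta_S\|_{\mathcal A}-\|\hbeta_{S^c}\|_{\mathcal A}\le \|\vdelta_S\|_{\mathcal A}-\|\vdelta_{S^c}\|_{\mathcal A}.
\end{equation*}
Substituting both bounds into $D$ yields
\begin{equation*}
D\le 2\lambda\Bigl(\tfrac 3 2\|\vdelta_S\|_{\mathcal A}-\tfrac 1 2\|\vdelta_{S^c}\|_{\mathcal A}\Bigr)-\norm{\design\vdelta}^2,
\end{equation*}
which is precisely the analog of the bound appearing in the two earlier propositions.

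From here I would finish in the same dichotomous way. If $\|\vdelta_{S^c}\|_{\mathcal A}>3\|\vdelta_S\|_{\mathcal A}$, then $D<0$ and \eqref{eq:soi-squared-cone} holds trivially. Otherwise $\vdelta\in\mathcal C'(S,3)$, so by the definition of $q_{\mathcal A}(S,3)$ we have $\|\vdelta_S\|_{\mathcal A}\le \norm{\design\vdelta}/q_{\mathcal A}(S,3)$ (if $q_{\mathcal A}(S,3)=0$ the conclusion is vacuous under the convention $a/0=+\infty$). The elementary inequality $2uv\le u^2+v^2$ applied to $u=\norm{\design\vdelta}$ and $v=3\lambda/(2q_{\mathcal A}(S,3))$ then gives $3\lambda\|\vdelta_S\|_{\mathcal A}\le \norm{\design\vdelta}^2+9\lambda^2/(4q_{\mathcal A}^2(S,3))$, whence $D\le 9\lambda^2/(4q_{\mathcal A}^2(S,3))$. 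Minimizing over $\vbeta$ supported on $S$ and over $S\in{\mathbb S}_{\mathcal A}$ yields \eqref{eq:soi-squared-cone}.

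The only genuinely new ingredient compared to the LASSO/group LASSO proofs is the decomposition $\|\vbeta\|_{\mathcal A}=\|\vbeta_S\|_{\mathcal A}+\|\vbeta_{S^c}\|_{\mathcal A}$ for admissible $S$, which is precisely Lemma \ref{lem2}; this is what makes admissibility the right notion and where I expect the main conceptual subtlety to lie (in particular, verifying that the triangle-inequality bound on $\|\vbeta\|_{\mathcal A}-\|\hbeta\|_{\mathcal A}$ goes through cleanly). Everything else is a routine transcription of the earlier arguments.
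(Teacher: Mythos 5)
Your proposal is correct and follows essentially the same route as the paper's proof, which is itself presented as a transcription of the LASSO argument using Lemma \ref{lem2} for the decomposition over admissible $S$, the duality bound $\tfrac1n\vxi^T\design\vdelta\le\|\tfrac1n\design^T\vxi\|_{\mathcal A,\circ}\|\vdelta\|_{\mathcal A}$, and the definition of $q_{\mathcal A}(S,3)$ (which, being normalized by $\|\vdelta_S\|_{\mathcal A}$ rather than $|\vdelta|_2$, lets one skip the Cauchy--Schwarz step of \eqref{eq:last}). You in fact supply more detail than the paper does, and all the steps check out.
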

\begin{proof} \smartqed In view of Lemma \ref{lem2}, we can follow exactly the lines of the proof of  Proposition~\ref{prop:lasso-deterministic} by replacing there the $\ell_1$ norm by the norm $\|\cdot\|_{\mathcal A}$ and taking into account the duality bound $\tfrac 1 n \vxi^T\design \Delta\le \|\tfrac 1 n \design^T \vxi\|_{\mathcal A, \circ} \|\Delta\|_{\mathcal A}$. At the end, instead of \eqref{eq:last}, we use that
 \begin{equation*}%\label{}
        3\lambda\|\Delta_S\|_{\mathcal A}
        \le
        3 \lambda \frac{\norm{\design\vdelta}}{q_{\mathcal A}(S,3)} 
        \le 
        \frac{9\lambda^2}{4 q_{\mathcal A}^2(S,3)} + \norm{\design\vdelta}^2.
    \end{equation*}
\qed
\end{proof}
Our next step is to find a range of values of $\lambda$ such that the event \eqref{eq:cone-event} holds with probability at least 1/2. We will consider only the case when ${\mathcal A}$ is a polyhedral cone, which corresponds to many examples considered in \cite{micchelli,maurer_pontil}. We will denote by ${\mathcal A}'$ the closure of the set ${\mathcal A}\cap \{a: |a|_1\le 1\}$.
\begin{lemma} \label{lem:cone}
Let the diagonal elements of matrix $\frac 1 n
    \design^T \design$ be not greater than 1. Let ${\mathcal A}$ be a polyhedral cone, and let ${\mathcal E}_{\mathcal A'}$ be the set of extremal points of ${\mathcal A}'$. If  
\begin{equation}\label{lambda:cone}
\lambda \ge \frac{2\sigma}{\sqrt{n}}\left(1 +  \sqrt{2\log(2 |{\mathcal E}_{\mathcal A'}|)}\right),
\end{equation}  
then the event \eqref{eq:cone-event} has probability at least 1/2.
\end{lemma}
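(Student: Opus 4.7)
The plan is to reduce the dual-norm evaluation $\|\tfrac{1}{n}\design^T\vxi\|_{\mathcal{A},\circ}$ to a maximum over finitely many extreme points, then control each coordinate of the maximum by Gaussian concentration, and finish with a union bound. The polyhedral assumption is crucial, since it makes $\mathcal{A}'$ a compact polytope with finitely many extreme points ($|\mathcal{E}_{\mathcal{A}'}|<\infty$).

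First I would observe that for any $\vbeta\in\R^p$, the map $a\mapsto \sum_{j=1}^p a_j\beta_j^2$ is linear in $a$, so its supremum over the polytope $\mathcal{A}'$ is attained at an extreme point:
\begin{equation*}
\|\vbeta\|_{\mathcal{A},\circ}^2 \;=\; \sup_{a\in\mathcal{A}'}\sum_{j=1}^p a_j\beta_j^2 \;=\; \max_{a\in\mathcal{E}_{\mathcal{A}'}}\sum_{j=1}^p a_j\beta_j^2.
\end{equation*}
Thus it suffices to bound $\max_{a\in\mathcal{E}_{\mathcal{A}'}}\sqrt{\sum_j a_j(\tfrac{1}{n}\design^T\vxi)_j^2}$.

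Next, fix $a\in\mathcal{E}_{\mathcal{A}'}$ and consider the function $g_a(\vz)=\sqrt{\sum_j a_j(\tfrac{\sigma}{n}\design^T\vz)_j^2}$ where $\vxi=\sigma\vz$, $\vz\sim\mathcal{N}(\vzero,I)$. For any $\vu\in\R^n$ with $|\vu|_2=1$, since $a_j\ge0$ and $|a|_1\le 1$,
\begin{equation*}
\sum_j a_j(\design_j^T\vu)^2 \le \max_j (\design_j^T\vu)^2 \le \max_j |\design_j|_2^2 \le n,
\end{equation*}
where the last step uses the diagonal hypothesis on $\frac{1}{n}\design^T\design$. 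This shows $g_a$ is $(\sigma/\sqrt{n})$-Lipschitz in $\vz$ with respect to $|\cdot|_2$. For the expectation, by Jensen's inequality and $\E(\design_j^T\vxi)^2=\sigma^2|\design_j|_2^2\le\sigma^2 n$,
\begin{equation*}
\E\, g_a(\vz) \le \sqrt{\tfrac{1}{n^2}\sum_j a_j\,\sigma^2|\design_j|_2^2} \le \sqrt{\tfrac{\sigma^2}{n}\sum_j a_j} \le \frac{\sigma}{\sqrt{n}}.
\end{equation*}

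Finally, the Gaussian concentration inequality (e.g.\ \cite[Theorem B.6]{giraud2014introduction}) yields, for each $a\in\mathcal{E}_{\mathcal{A}'}$ and each $t>0$,
\begin{equation*}
\Pro\!\left(g_a(\vz) > \frac{\sigma}{\sqrt{n}}(1+t)\right) \le e^{-t^2/2}.
\end{equation*}
Choosing $t=\sqrt{2\log(2|\mathcal{E}_{\mathcal{A}'}|)}$ and applying the union bound over the $|\mathcal{E}_{\mathcal{A}'}|$ extreme points gives
\begin{equation*}
\Pro\!\left(\max_{a\in\mathcal{E}_{\mathcal{A}'}} g_a(\vz) > \frac{\sigma}{\sqrt{n}}\bigl(1+\sqrt{2\log(2|\mathcal{E}_{\mathcal{A}'}|)}\bigr)\right) \le |\mathcal{E}_{\mathcal{A}'}|\cdot \frac{1}{2|\mathcal{E}_{\mathcal{A}'}|} = \frac{1}{2}.
\end{equation*}
Under \eqref{lambda:cone}, the right-hand side of the inequality inside the probability is $\lambda/2$, so the event \eqref{eq:cone-event} holds with probability at least $1/2$. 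The main technical point is identifying the correct Lipschitz constant uniformly in $a\in\mathcal{E}_{\mathcal{A}'}$, which is where both the $|a|_1\le 1$ constraint and the diagonal normalization of $\design$ come into play; everything else is routine.
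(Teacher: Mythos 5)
Your proof is correct and follows essentially the same route as the paper's: reduce the dual norm to a maximum over the finitely many extreme points of the polytope $\mathcal{A}'$, establish the $(\sigma/\sqrt{n})$-Lipschitz property uniformly over $|a|_1\le 1$, bound the expectation by Jensen, and conclude by Gaussian concentration plus a union bound with $x=\log(2|\mathcal{E}_{\mathcal{A}'}|)$. The only cosmetic difference is that you bound the weighted sum by its maximum term rather than by Cauchy--Schwarz termwise, which yields the same Lipschitz constant.
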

\begin{proof} Denote by $\eta_j=\tfrac 1 n e_j^T\design^T \vxi$ the $j$th component of $\tfrac 1 n \design^T \vxi$. We have 
\begin{equation}\label{proof:lambda:cone}
\|\tfrac 1 n \design^T \vxi\|_{\mathcal A, \circ}  = \sup_{a\in {\mathcal A}: |a|_1\le 1}\sqrt{\sum_{j=1}^p a_j \eta_j^2}= \max_{a\in {\mathcal E}_{\mathcal A'}} \sqrt{\sum_{j=1}^p a_j \eta_j^2},
\end{equation} 
where the last equality is due to the fact that ${\mathcal A'}$ is a convex polytope. Let $\vz=\vxi/\sigma$ be a standard normal $\mathcal N(\vzero,I_{n\times n})$ random vector. Note that, for all $a$ such that $|a|_1\le 1$, the function $f_a(\vz) = \sigma \sqrt{\sum_{j=1}^p a_j (\tfrac 1 n e_j^T\design^T \vz)^2}$ is $\sigma/\sqrt{n}$-Lipschitz with respect to the Euclidean norm. Indeed,
\begin{eqnarray*}
|f_a(\vz)-f_a(\vz')|&\le&{\sigma}\sqrt{\sum_{j=1}^p a_j (\tfrac 1 n e_j^T\design^T (\vz-\vz'))^2}\le  \frac{\sigma}{\sqrt{n}} \sqrt{\sum_{j=1}^p a_j \| \design e_j \|^2 |\vz-\vz'|_2^2}
\\
&\le & \frac{\sigma}{\sqrt{n}} |\vz-\vz'|_2,\qquad \forall \ |a|_1\le 1,
\end{eqnarray*}
since $\max_j  \| \design e_j \|^2\le 1$ by the assumption of the lemma. Therefore, the Gaussian concentration inequality, cf., e.g., \cite[Theorem B.6]{giraud2014introduction}, implies that, for all $x>0$,  
$$
\Pro \left( f_a(\vz)  \ge \E  f_a(\vz) + \sigma\sqrt{\frac{2x}{n}} \right)\le e^{-x}. 
$$
Here, $f_a(\vz)=\sqrt{\sum_{j=1}^p a_j \eta_j^2}$ and  $\E \sqrt{\sum_{j=1}^p a_j \eta_j^2}\le \left(\E \sum_{j=1}^p a_j \eta_j^2\right)^{1/2} \le \sigma/\sqrt{n}$ for all $a$ in the positive orthant such that $|a|_1\le 1$ where we have used that $\E\eta_j^2\le \sigma^2/n$ for $j=1,...,p$. Thus, for all $a$ in the positive orthant such that $|a|_1\le 1$ and all $x>0$ we have 
$$
\Pro \left( \sqrt{\sum_{j=1}^p a_j \eta_j^2} \ge  \frac{\sigma}{\sqrt{n}} ( 1+\sqrt{2x}) \right)\le e^{-x}. 
$$
The result of the lemma follows immediately from this inequality, \eqref{proof:lambda:cone} and the union bound.
\qed
\end{proof}
Finally, from  Proposition~\ref{prop:cone-deterministic}, Lemma~\ref{lem:cone} and Theorem~\ref{cor3} we get the following theorem. 

\begin{theorem}
    \label{thm:cone}
    Assume that the diagonal elements of matrix $\frac 1 n
    \design^T \design$ are not greater than 1. Let $\lambda$ be such that \eqref{lambda:cone} holds.
    Then for any $\delta\in(0,1)$, the estimator \eqref{eq:def-hbeta-pen} with penalty $F(\cdot)=\lambda\|\cdot\|_{\mathcal A}$ 
    satisfies
    \begin{equation*}
        \norm{
            \design\hbeta
            -
            \vf
        }
        \le
        \min_{S\in {\mathbb S}_{\mathcal A} }
               \left[
        \min_{\vbeta\in\R^p: \supp(\vbeta)=S}
             \norm{
                \design\vbeta
                -
                \vf
            }
            + \frac{3  \lambda}{2q_{\mathcal A}(S,3)}
        \right]
        + \frac{\sigma \Phi^{-1}(1-\delta)}{\sqrt{n}} 
        \label{eq:all-delta-cone}
    \end{equation*}
    with probability at least $1-\delta$. Furthermore,
    \begin{equation*}
      \E  \norm{
            \design\hbeta
            -
            \vf
        }
        \le
        \min_{S\in {\mathbb S}_{\mathcal A} }
               \left[
        \min_{\vbeta\in\R^p:\supp(\vbeta)=S}
             \norm{
                \design\vbeta
                -
                \vf
            }
            + \frac{3 \lambda}{2q_{\mathcal A}(S,3)}
        \right]
        + \frac{\sigma}{ \sqrt{2 \pi n}}.
        \label{eq:all-delta-cone-bis}
         \end{equation*}
\end{theorem}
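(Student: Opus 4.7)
The plan is to reassemble the same three building blocks that drove the proofs of Theorems \ref{thm:lasso} and \ref{thm:glasso}. I would first pick a pair $(S, \vbeta)$ with $S \in {\mathbb S}_{\mathcal A}$ and $\supp(\vbeta) = S$ that minimizes the right-hand side of the target oracle inequality. Lemma \ref{lem:cone} says that under \eqref{lambda:cone} the event \eqref{eq:cone-event} has probability at least $1/2$, and on that event Proposition \ref{prop:cone-deterministic} gives the squared-risk bound
\begin{equation*}
\norm{\design\hbeta - \vf}^2 \le \norm{\design\vbeta - \vf}^2 + \frac{9\lambda^2}{4 q_{\mathcal A}^2(S, 3)}.
\end{equation*}
Consequently, condition \eqref{rough_bound} of Theorem \ref{cor3} is satisfied with $R$ equal to the square root of the right-hand side above.

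Since $F(\cdot) = \lambda \|\cdot\|_{\mathcal A}$ is a norm, hence a proper convex function, Theorem \ref{cor3} is directly applicable: the tail bound \eqref{cor3_3} yields
\begin{equation*}
\Pro\!\left(\norm{\design\hbeta - \vf} \le R + \tfrac{\sigma\Phi^{-1}(1-\delta)}{\sqrt n}\right) \ge 1-\delta,
\end{equation*}
while \eqref{integration} yields $\E\norm{\design\hbeta - \vf} \le R + \sigma/\sqrt{2\pi n}$. Bounding $R \le \norm{\design\vbeta - \vf} + 3\lambda/(2 q_{\mathcal A}(S,3))$ via $\sqrt{a+b} \le \sqrt a + \sqrt b$ and recalling that $(S, \vbeta)$ were chosen as minimizers delivers both displayed inequalities of the theorem.

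I do not expect a substantive obstacle, because the features specific to the cone-induced norm have already been absorbed into the earlier lemmas: Lemma \ref{lem2} supplies the decomposition $\|\vbeta\|_{\mathcal A} = \|\vbeta_S\|_{\mathcal A} + \|\vbeta_{S^c}\|_{\mathcal A}$ used inside Proposition \ref{prop:cone-deterministic}, and Lemma \ref{lem:cone} handles the Gaussian supremum defining $\|\tfrac 1 n \design^T\vxi\|_{\mathcal A, \circ}$ through the finite set of extremal points of ${\mathcal A}'$. The only point requiring care is that the outer minimization must be restricted to admissible sets $S \in {\mathbb S}_{\mathcal A}$, precisely the class on which Proposition \ref{prop:cone-deterministic} provides a deterministic bound; this is why the same restriction appears in the statement of the theorem.
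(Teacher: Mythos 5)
Your proposal is correct and follows exactly the route the paper intends: the paper gives no separate proof of Theorem \ref{thm:cone}, stating only that it follows by combining Proposition \ref{prop:cone-deterministic}, Lemma \ref{lem:cone} and Theorem \ref{cor3}, and your argument is the same assembly used explicitly in the proof of Theorem \ref{thm:lasso} (weak bound $R$ from the deterministic oracle inequality on a probability-$1/2$ event, then concentration around the median and $\sqrt{a+b}\le\sqrt a+\sqrt b$). No gaps.
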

Note that, in contrast to Theorems \ref{thm:lasso} and \ref{thm:glasso}, Theorem \ref{thm:cone} is a less explicit result. 
Indeed, the form of the oracle inequalities depends on the value $q_{\mathcal A}(S,3)$ and, through $\lambda$, on the value
$|{\mathcal E}_{\mathcal A'}|$. Both quantities are solutions of nontrivial geometric problems depending on the form of the cone ${\mathcal A}$.  Little is known about them. Note also that the knowledge of $|{\mathcal E}_{\mathcal A'}|$ (or of an upper bound on it) is required to find the appropriate~$\lambda$.

%%%
\section{Application to SLOPE}
\label{s:slope}

This section studies the SLOPE estimator introduced in \cite{bogdan}, which is yet another convex regularized estimator.
Define the norm $|\cdot|_*$ in $\R^p$ by the relation
\begin{equation*}
    |\vu|_* \triangleq \max_{\phi} \sum_{j=1}^p  \mu_j u_{\phi(j)} ,
    \qquad
    \vu=(u_1,...,u_p) \in\R^p,
\end{equation*}
where the maximum is taken over all permutations $\phi$ of $\{1,...,p\}$ and $\mu_j>0$ are some weights. In what follows,
we assume that
\begin{equation*}
    \mu_j = \sigma\sqrt{\log(2p/j) /n}, \qquad j=1,...,p.
\end{equation*}
For any $\vu=(u_1,...,u_p)\in\R^p$,
let $u_1^*\ge u_2^*\ge...\ge u_p^*\ge 0$
be a non-increasing rearrangement of $|u_1|,...,|u_p|$.
Then the norm $|\cdot|_*$ can be equivalently defined  as
\begin{equation*}
    |\vu|_* = \sum_{j=1}^p  \mu_j u_i^*,
    \qquad
    \vu=(u_1,...,u_p) \in\R^p.
\end{equation*}
Given a tuning parameter $A>0$, we define the SLOPE estimator $\hbeta$  
as a solution of the optimization problem
\begin{equation}
    \hbeta \in \argmin_{\vbeta\in\R^p}\left(
    \norm{ \vy-\design\vbeta}^2
    + 2 A |\vbeta|_*\right).
    \label{eq:def-slope}
\end{equation}
As $|\cdot|_*$ is a norm, it is a convex function 
so Proposition \ref{prop3} and Theorem \ref{cor3} apply.
For any $s\in \{1,...,p\}$ and any $c_0>0$, the Weighted Restricted Eigenvalue (WRE) constant $\vartheta(s,c_0)\ge 0$ is defined as follows:
\begin{equation*}
    \vartheta^2(s,c_0)
    \triangleq
    \min_{
        \vdelta\in\R^p: 
        \sum_{j=s+1}^p \mu_j \delta_j^* \le c_0 (\sum_{j=1}^s \mu_j^2)^{1/2} |\vdelta|_2
    }
    \frac{\norm{\design\vdelta}^2}{|\vdelta|_2^2}.
\end{equation*}
The $WRE(s,c_0)$ condition is said to hold if $\vartheta(s,c_0)>0$.

We refer the reader to  \cite{bellec2016slope} 
for a comparison of this RE-type constant with other restricted eigenvalue constants such as \eqref{eq:def-kappa}. A high level message is that the WRE condition is only slightly stronger than the RE condition.
It is also established in \cite{bellec2016slope} 
that a large class of random matrices $\design$ with independent and possibly anisotropic rows satisfies the condition $\vartheta(s,c_0) > 0$ with high probability
provided that $n > C s \log(p/s)$ for some absolute constant $C>0$.

For $j=1,...,p$, let $g_j = \frac{1}{\sqrt n}\ve_j \design^T \vxi$, where $\ve_j$ is the $j$th canonical basis vector in $\R^p$,
and let $g_1^*\ge g_2^*\ge...\ge g_p^*\ge 0$
be a non-increasing rearrangement of $|g_1|,...,|g_p|$.
Consider the event
\begin{equation}
\Omega_* \triangleq
    \cap_{j=1}^p
    \left\{
        g_j^*
        \le 4\sigma\sqrt{\log(2p/j)}
    \right\} .
    \label{eq:def-Omega-star}
\end{equation}
The next proposition establishes a deterministic result for the SLOPE estimator on the event \eqref{eq:def-Omega-star}.

\begin{proposition}
    \label{prop:deterministic-slope}
    On the event \eqref{eq:def-Omega-star}, the SLOPE estimator $\hbeta$ defined by \eqref{eq:def-slope}
    with $A\ge8$ satisfies 
    \begin{equation}
        \norm{
            \design\hbeta
            -
            \vf
        }^2
        \le
        \min_{s \in\{1,...,p\}} \left[
        \min_{\vbeta\in\R^p: |\vbeta|_0 \le s}
                   \norm{
                \design\vbeta
                -
                \vf
            }^2
            + \frac{9 A^2 \sigma^2 s \log(2ep/s)}{4 n \vartheta^2(s,3)}
        \right]
        \label{eq:soi-squared-slope}
    \end{equation}
    with the convention that $a/0=+\infty$ for any $a>0$.
\end{proposition}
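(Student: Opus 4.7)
The proof would closely mirror the proof of Proposition~\ref{prop:lasso-deterministic}, with the SLOPE norm $|\cdot|_*$ replacing the $\ell_1$-norm throughout. Fix $s$ and $\vbeta$ with $|\vbeta|_0\le s$ minimizing the right-hand side, and set $\vdelta = \hbeta - \vbeta$. Applying Lemma~\ref{lem1} with $F(\cdot) = A|\cdot|_*$ gives
\begin{equation*}
\norm{\design\hbeta - \vf}^2 - \norm{\design\vbeta - \vf}^2
\le 2\bigl(\tfrac{1}{n}\vxi^T\design\vdelta + A|\vbeta|_* - A|\hbeta|_*\bigr) - \norm{\design\vdelta}^2 \triangleq D.
\end{equation*}

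The next step is to control the stochastic term on the event $\Omega_*$. Writing $\tfrac{1}{n}\vxi^T\design\vdelta = \tfrac{1}{\sqrt n}\sum_{j=1}^p g_j\delta_j$ and applying the rearrangement inequality yields $\tfrac{1}{n}|\vxi^T\design\vdelta| \le \tfrac{1}{\sqrt n}\sum_j g_j^*\delta_j^*$. On $\Omega_*$, $g_j^* \le 4\sigma\sqrt{\log(2p/j)} = 4\sqrt n\,\mu_j$, so $\tfrac{1}{n}|\vxi^T\design\vdelta| \le 4|\vdelta|_* \le (A/2)|\vdelta|_*$ since $A\ge 8$. This gives
\begin{equation*}
D \le A\bigl(|\vdelta|_* + 2|\vbeta|_* - 2|\hbeta|_*\bigr) - \norm{\design\vdelta}^2.
\end{equation*}

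I would then invoke the SLOPE decomposition inequality (the analog for $|\cdot|_*$ of the triangle-inequality splitting used in the LASSO proof): for any $\vbeta$ with $|\vbeta|_0\le s$ and any $\vdelta$,
\begin{equation*}
|\vbeta|_* - |\hbeta|_* \le \sum_{j=1}^s \mu_j \delta_j^* - \sum_{j=s+1}^p \mu_j \delta_j^*.
\end{equation*}
Combined with $|\vdelta|_* = \sum_{j=1}^p\mu_j\delta_j^*$, this upgrades the previous bound to
\begin{equation*}
D \le A\Bigl(3\sum_{j=1}^s \mu_j \delta_j^* - \sum_{j=s+1}^p \mu_j \delta_j^*\Bigr) - \norm{\design\vdelta}^2.
\end{equation*}

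The conclusion proceeds by a dichotomy, exactly paralleling the LASSO argument. Using Cauchy-Schwarz, $\sum_{j=1}^s \mu_j\delta_j^* \le (\sum_{j=1}^s \mu_j^2)^{1/2}|\vdelta|_2$. If $\sum_{j=s+1}^p \mu_j \delta_j^* > 3(\sum_{j=1}^s \mu_j^2)^{1/2}|\vdelta|_2$ then the bracket above is negative and $D<0$, so the claim is trivial. Otherwise $\vdelta$ belongs to the cone defining $\vartheta(s,3)$, giving $|\vdelta|_2 \le \norm{\design\vdelta}/\vartheta(s,3)$; dropping the negative term and applying $ab-b^2\le a^2/4$ with $a = 3A(\sum_{j=1}^s \mu_j^2)^{1/2}/\vartheta(s,3)$ and $b = \norm{\design\vdelta}$ yields $D \le 9A^2(\sum_{j=1}^s \mu_j^2)/(4\vartheta^2(s,3))$. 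One finishes via $\sum_{j=1}^s \mu_j^2 = (\sigma^2/n)\sum_{j=1}^s \log(2p/j) \le \sigma^2 s\log(2ep/s)/n$, which follows from $\log(s!) \ge s\log(s/e)$.

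The main obstacle is the SLOPE decomposition inequality in Step 3. Unlike $|\cdot|_1$, the norm $|\cdot|_*$ is not additive on disjoint supports because it depends jointly on the sorted magnitudes of all coordinates, so the sparse-vs-antisparse splitting available in the LASSO proof has to be replaced by a rearrangement argument exploiting the monotonicity of the weights $\mu_j$; this decomposition lemma is precisely the technical core inherited from \cite{bellec2016slope}. Everything else is a direct translation of the LASSO proof template.
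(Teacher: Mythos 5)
Your proof is correct and follows essentially the same route as the paper: the basic inequality of Lemma~\ref{lem1}, the bound $\tfrac1n\vxi^T\design\vdelta\le 4|\vdelta|_*\le (A/2)|\vdelta|_*$ on $\Omega_*$, the sparse decomposition of $|\cdot|_*$ from \cite[Lemma A.1]{bellec2016slope}, the cone dichotomy with $\vartheta(s,3)$, and the Stirling bound $\sum_{j=1}^s\mu_j^2\le \sigma^2 s\log(2ep/s)/n$. The only (harmless) deviation is that you state the decomposition lemma in the slightly sharper form $|\vbeta|_*-|\hbeta|_*\le \sum_{j=1}^s\mu_j\delta_j^*-\sum_{j=s+1}^p\mu_j\delta_j^*$ and then apply Cauchy--Schwarz, whereas the paper quotes the already-combined inequality $\tfrac12|\vdelta|_*+|\vbeta|_*-|\hbeta|_*\le \tfrac32(\sum_{j=1}^s\mu_j^2)^{1/2}|\vdelta|_2-\tfrac12\sum_{j=s+1}^p\mu_j\delta_j^*$ directly from \cite{bellec2016slope}.
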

\begin{proof}%[Proof of Proposition \ref{prop:deterministic-slope}]
    \smartqed
    Let $s\in\{1,...,p\}$ and $\vbeta$ be minimizers of the right hand side of
    \eqref{eq:soi-squared-slope} and let $\vdelta \triangleq \hbeta - \vbeta$. We will assume that $\vartheta(s,3)>0$ since otherwise the claim is trivial.
     From Lemma \ref{lem1} with $F(\vbeta)= A|\vbeta|_*$ we have    
     \begin{equation*}
        \norm{
            \design\hbeta
            -
            \vf
        }^2
        -
        \norm{
            \design\vbeta
            -
            \vf
        }^2
        \le 2\left( \tfrac 1 n \vxi^T\design \vdelta + A |\vbeta|_* - A|\hbeta|_*\right) 
        - \norm{\design\vdelta}^2
        \triangleq D.
    \end{equation*}
    On the event \eqref{eq:def-Omega-star},
    the right hand side of the previous display satisfies
    \begin{equation*}
       D
        \le
        2A \left[
            \frac 1 2|\vdelta|_*   
            + |\vbeta|_* - |\hbeta|_*
        \right] - \norm{\design\vdelta}^2.
    \end{equation*}
    By \cite[Lemma A.1]{bellec2016slope},
    $
    \frac 1 2|\vdelta|_*   
        + |\vbeta|_* - |\hbeta|_*
        \le
        \frac 3 2 (\sum_{j=1}^s \mu_j^2)^{1/2} |\vdelta|_2 - \frac 1 2 \sum_{j=s+1}^p \mu_j \delta_j^*
    $.

    If $3 (\sum_{j=1}^s \mu_j^2)^{1/2} |\vdelta|_2 \le  \sum_{j=s+1}^p \mu_j \delta_j^*$,
    then the claim follows trivially.
   If the reverse inequality holds, we have $|\vdelta|_2 \le \norm{\design\vdelta}/\vartheta(s,3)$.
    This implies
    \begin{equation*}
        3A  (\sum_{j=1}^s \mu_j^2)^{1/2}   |\vdelta|_2
        \le 
        \frac{9 A^2\sum_{j=1}^s \mu_j^2}{4\vartheta^2(s,3)} + \norm{\design\vdelta}^2
        \le
        \frac{9 A^2 \sigma^2s\log(2ep/s)}{4 n \vartheta^2(s,3)} + \norm{\design\vdelta}^2,
    \end{equation*}
    where for the last inequality we have used that, by Stirling's formula,
    $\log(1/s!) \le s\log(e/s)$ and thus $\sum_{j=1}^s \mu_j^2 \le \sigma^2 s\log(2ep/s)/n$. Combining the last three displays yields the result.
    \qed
\end{proof}

We now follow the same argument as in Sections \ref{s:lasso} and \ref{s:glasso}. In order to apply Theorem~\ref{cor3},
we need to find a ``weak bound" $R$ on the error $\norm{\design\hbeta - \vf}$, i.e., a bound valid
with probability at least $1/2$. If the event $\Omega_*$ holds with probability at least 1/2 then, due to Proposition~\ref{prop:deterministic-slope}, we can take as $R$ the square root of the right hand side of \eqref{eq:soi-squared-slope}. 
Since $\vxi\sim \mathcal N(\vzero, \sigma^2 I_{n\times n})$
and the diagonal elements of $\frac 1 n
\design^T \design$ are bounded by 1,
the random variables  $g_1,...,g_p$ 
are centered Gaussian with variance
    at most $\sigma^2$.
The following proposition from \cite{bellec2016slope} shows
that the event \eqref{eq:def-Omega-star} has probability at least $1/2$.

\begin{proposition}{\rm\cite{bellec2016slope}}
    \label{prop:proba-estimate-slope}
    If $g_1,...,g_p$ are centered Gaussian random variables with variance
    at most $\sigma^2$,
    then the event
    \eqref{eq:def-Omega-star}
    has probability at least $1/2$.
\end{proposition}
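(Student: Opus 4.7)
The natural approach is a union bound over $j = 1, \dots, p$, followed by individual tail estimates for each $g_j^*$. Writing $\Omega_*^c = \bigcup_{j=1}^p \{g_j^* > t_j\}$ with $t_j = 4\sigma\sqrt{\log(2p/j)}$, it is enough to show that $\sum_{j=1}^p \PP(g_j^* > t_j) \le 1/2$.

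For each fixed $j$, I would use the reformulation $\{g_j^* > t\} = \{N(t) \ge j\}$, where $N(t) = \sum_{i=1}^p \1_{\{|g_i| > t\}}$ counts the coordinates whose absolute value exceeds $t$. Combined with the standard Gaussian tail estimate $\PP(|g_i| > t) \le 2\exp(-t^2/(2\sigma^2))$ (valid since $\mathrm{Var}(g_i) \le \sigma^2$) and the substitution $t = t_j$, which makes $t_j^2/(2\sigma^2) = 8\log(2p/j)$, each coordinate exceedance probability is at most $2(j/(2p))^8$. The large exponent $8$ produced by the constant $4$ in $t_j$ is what makes the argument succeed: in the case of independent $g_i$'s, a binomial Chernoff bound $\PP(N(t_j) \ge j) \le \binom{p}{j}[2(j/(2p))^8]^j$ together with $\binom{p}{j} \le (ep/j)^j$ reduces each summand to a quantity geometric in $j$, and the sum over $j = 1, \dots, p$ is then easily bounded by $1/2$.

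The main obstacle is extending this to general (possibly correlated) centered Gaussians, where the joint probability of simultaneous exceedances does not factorise. The naive first-moment Markov estimate $\PP(N(t_j) \ge j) \le \E N(t_j)/j \le (j/(2p))^7/2$ is geometric in $(j/p)^7$, but summing over $j$ yields a bound of order $p$, useless for large $p$. To push the proof through in the correlated case I would exploit the Gaussian structure directly: writing $\vg = A\vxi$ with $\vxi \sim \mathcal{N}(\vzero, I_{n \times n})$ and noting that $\vg \mapsto g_j^*$ is $1$-Lipschitz in the $\ell_\infty$ norm, the condition $\max_i \|A^T \ve_i\|_2 \le \sigma$ makes the composition $\vxi \mapsto g_j^*$ a $\sigma$-Lipschitz function in the Euclidean norm. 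The Gaussian concentration inequality then gives $\PP(g_j^* > \Med[g_j^*] + \sigma u) \le e^{-u^2/2}$ for all $u > 0$, and combining this with an upper estimate $\Med[g_j^*] \lesssim \sigma\sqrt{\log(2p/j)}$ (obtainable from $j \cdot g_j^* \le \sum_{i \le j} g_i^*$ together with a Gaussian maximal inequality over subsets of size $j$) yields a sufficiently strong exponential tail. The constant $4$ in the threshold is chosen just large enough that the resulting sum $\sum_{j=1}^p \PP(g_j^* > t_j)$ stays below $1/2$ uniformly in $p$; the detailed verification is carried out in \cite{bellec2016slope}.
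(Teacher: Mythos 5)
The paper itself does not prove this proposition (it imports it from \cite{bellec2016slope}), so I compare your plan with the argument given there. Your very first reduction --- ``it is enough to show that $\sum_{j=1}^p \PP(g_j^* > t_j) \le 1/2$'' with $t_j = 4\sigma\sqrt{\log(2p/j)}$ --- is already unachievable for general correlated Gaussians, so no sharpening of the per-$j$ tail estimates can rescue the proof. Take the extreme example discussed right after the proposition in the paper: $g_1=\dots=g_p=\sigma\eta$ with $\eta\sim\mathcal N(0,1)$. Then $g_j^*=\sigma|\eta|$ for every $j$, so $\PP(g_j^*>t_j)=\PP\bigl(|\eta|>4\sqrt{\log(2p/j)}\bigr)\ge c\,(j/(2p))^{8}/\sqrt{\log(2p/j)}$ for an absolute constant $c>0$, and $\sum_{j=1}^p (j/(2p))^{8}\asymp p$. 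Hence the sum of the individual probabilities grows linearly in $p$ and exceeds $1/2$ for large $p$, even though $\PP(\Omega_*^c)=\PP(|\eta|>4\sqrt{\log 2})<10^{-3}$: the union bound over $j$ itself is the obstruction, not the quality of the per-$j$ bounds. Your proposed fix via Gaussian concentration of $g_j^*$ around its median has exactly the defect you already diagnosed for the first-moment bound: since $\Med[g_j^*]$ can genuinely be of order $\sigma\sqrt{2\log(2p/j)}$ (independent case), concentration yields at best $\PP(g_j^*>t_j)\le (j/(2p))^{c}$ with the fixed exponent $c=(4-\sqrt2)^2/2\approx 3.3$, and $\sum_{j=1}^p (j/(2p))^{c}\ge (p/2)\,4^{-c}$ grows linearly in $p$ for every fixed $c$. (The binomial Chernoff bound escapes this in the independent case only because its exponent grows with $j$.) A secondary slip: the maximal inequality you invoke for $\Med[g_j^*]$ gives only $\sigma\sqrt{2j\log(ep/j)}$ in the correlated case, because $\sum_{i\in I}|g_i|$ is $j\sigma$-Lipschitz, not $\sqrt{j}\sigma$-Lipschitz; the correct median bound $\sigma\sqrt{2\log(4p/j)}$ follows instead from $\PP(g_j^*>t)\le \E N(t)/j$. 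But this is moot given the union-bound obstruction.

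The proof in \cite{bellec2016slope} avoids the union bound over $j$ altogether by controlling all order statistics simultaneously through one scalar random variable. Let $M=\sum_{i=1}^p\exp\bigl(g_i^2/(8\sigma^2)\bigr)$. If $g_j^*>4\sigma\sqrt{\log(2p/j)}$ for some $j$, then at least $j$ indices $i$ satisfy $\exp\bigl(g_i^2/(8\sigma^2)\bigr)>(2p/j)^2$, and since every summand is nonnegative, $M> j(2p/j)^2=4p^2/j\ge 4p$. Because each $g_i$ has variance at most $\sigma^2$, $\E\exp\bigl(g_i^2/(8\sigma^2)\bigr)\le(1-1/4)^{-1/2}=2/\sqrt3$, so a single application of Markov's inequality gives $\PP(\Omega_*^c)\le\PP(M>4p)\le \E M/(4p)\le 1/(2\sqrt3)<1/2$, with no independence assumption anywhere. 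The moral, if you want to repair your outline, is that the aggregation over $j$ must happen inside a single event (here, a level set of $M$) before any probability bound is applied.
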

Proposition \ref{prop:proba-estimate-slope} cannot be substantially improved
without additional assumptions.
To see this, let $\eta\sim\mathcal N(0,1)$
and set $g_j = \sigma \eta $ for all $j=1,...,p$.
The random variables $g_1,...,g_p$ satisfy the assumption of Proposition \ref{prop:proba-estimate-slope}.
In this case, the event \eqref{eq:def-Omega-star} satisfies
$\mathbb P ( \Omega_*) = \mathbb P( |\eta| \le 4\sqrt{\log 2})$
so that $\mathbb P(\Omega_*)$ is an absolute constant.
Thus, without additional assumptions
on the random variables $g_1,...,g_p$,
there is no hope to prove a lower bound better than $\mathbb P(\Omega_*) \ge c$ for some fixed numerical constant $c\in (0,1)$ independent of $p$.

By combining Propositions \ref{prop:deterministic-slope}
and \ref{prop:proba-estimate-slope},
we obtain that the oracle bound \eqref{eq:soi-squared-slope}
holds with probability at least $1/2$.
At first sight, this result is uninformative as it cannot 
even imply the consistency, i.e., the convergence of the error $\norm{\design\hbeta
- \vf}$ to 0 in probability.
But the SLOPE estimator is a convex regularized estimator
and the argument of Section \ref{s:machinery} yields
that a risk bound with probability $1/2$ is in fact
very informative:
Theorem \ref{cor3} immediately
implies the following oracle inequality for any confidence level $1-\delta$ as well as an oracle inequality in expectation.

\begin{theorem}
    \label{thm:slope}
    Assume that the diagonal elements of the matrix $\frac 1 n
    \design^T \design$ are not greater than 1.
    Then for all $\delta\in(0,1)$, the SLOPE
    estimator $\hbeta$ defined by \eqref{eq:def-slope} with tuning parameter $A\ge 8$
    satisfies
    \begin{equation*}
        \norm{
            \design\hbeta
            -
            \vf
        }
        \le
        \min_{s \in\{1,...,p\}}
        \left[
        \min_{\vbeta\in\R^p: |\vbeta|_0 \le s}
            \norm{
                \design\vbeta
                -
                \vf
            }
            + \frac{3\sigma A }{2 \vartheta(s,3)}
              \sqrt{\frac{s \log(2ep/s)}{n}}
        \right]
            + \frac{\sigma \Phi^{-1}(1-\delta)}{\sqrt{n}}
        \label{eq:all-delta-slope}
    \end{equation*}
    with probability at least $1-\delta$. Furthermore,
    \begin{equation*}
      \E  \norm{
            \design\hbeta
            -
            \vf
        }
        \le
        \min_{s \in\{1,...,p\}}
        \left[
        \min_{\vbeta\in\R^p: |\vbeta|_0 \le s}
            \norm{
                \design\vbeta
                -
                \vf
            }
            + \frac{3\sigma A }{2 \vartheta(s,3)}
              \sqrt{\frac{s \log(2ep/s)}{n}}
        \right]
               + \frac{\sigma}{ \sqrt{2 \pi n}}.
        \label{eq:}
         \end{equation*}
\end{theorem}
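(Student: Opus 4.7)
The plan is to mirror exactly the strategy already used for the LASSO (Theorem \ref{thm:lasso}) and the group LASSO (Theorem \ref{thm:glasso}): combine the deterministic oracle bound on a favourable event with a probability lower bound on that event to get a "weak" median bound, then upgrade to all confidence levels and to expectation via Theorem \ref{cor3}.

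First I would fix $s \in \{1,\dots,p\}$ and $\vbeta\in\R^p$ with $|\vbeta|_0 \le s$ achieving the minimum on the right-hand side of the claimed oracle inequality (the case $\vartheta(s,3)=0$ being trivial). Proposition \ref{prop:deterministic-slope} shows that on the event $\Omega_*$ defined in \eqref{eq:def-Omega-star}, the squared prediction error of $\hbeta$ is bounded above by
\begin{equation*}
\norm{\design\vbeta - \vf}^2 + \frac{9 A^2 \sigma^2 s \log(2ep/s)}{4 n \vartheta^2(s,3)}.
\end{equation*}
Taking the square root and using $\sqrt{a+b}\le \sqrt a + \sqrt b$, on $\Omega_*$ the (non-squared) prediction error is at most
\begin{equation*}
R \triangleq \norm{\design\vbeta - \vf} + \frac{3 \sigma A}{2 \vartheta(s,3)}\sqrt{\frac{s \log(2ep/s)}{n}}.
\end{equation*}

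Next I would invoke Proposition \ref{prop:proba-estimate-slope}, which under the assumption on the diagonal of $\tfrac{1}{n}\design^T\design$ guarantees that $g_1,\dots,g_p$ are centered Gaussians with variance at most $\sigma^2$ and hence that $\Pro(\Omega_*) \ge 1/2$. Consequently,
\begin{equation*}
\Pro\bigl(\norm{\design\hbeta - \vf} \le R \bigr) \ge 1/2,
\end{equation*}
so the median of the prediction error satisfies $\Med[\norm{\design\hbeta - \vf}] \le R$. This verifies the hypothesis \eqref{rough_bound} of Theorem \ref{cor3}, since the SLOPE penalty $A|\cdot|_*$ is a norm (hence a proper convex function) so Proposition \ref{prop3} applies and $\hmu(\vy) = \design\hbeta(\vy)$ is $1$-Lipschitz.

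Finally I would apply Theorem \ref{cor3} with this $R$: inequality \eqref{cor3_3} yields, for every $\delta\in(0,1)$,
\begin{equation*}
\Pro\left(\norm{\design\hbeta - \vf} \le R + \frac{\sigma \Phi^{-1}(1-\delta)}{\sqrt n}\right) \ge 1-\delta,
\end{equation*}
which is the claimed bound in probability after taking the minimum over $s$ and $\vbeta$ (the minimisation can be pulled outside since $R$ was constructed from arbitrary $s, \vbeta$). The expectation bound follows identically from \eqref{integration}, giving the extra $\sigma/\sqrt{2\pi n}$ term.

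I do not expect any serious obstacle: all the heavy lifting sits in Proposition \ref{prop:deterministic-slope} (which is already proved using Lemma \ref{lem1} and \cite[Lemma A.1]{bellec2016slope}), in Proposition \ref{prop:proba-estimate-slope} (cited from \cite{bellec2016slope}), and in Theorem \ref{cor3}. The only small point to be careful about is the order of operations: the minimum over $s$ and $\vbeta$ must be taken after invoking Theorem \ref{cor3}, because the concentration deviation term $\sigma \Phi^{-1}(1-\delta)/\sqrt n$ is independent of $s$ and $\vbeta$, so it comes out of the minimum cleanly. No other step poses a difficulty.
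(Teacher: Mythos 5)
Your proposal is correct and is exactly the argument the paper intends: the paper omits this proof, stating it is "similar to that of Theorem \ref{thm:lasso}", and your steps (deterministic bound from Proposition \ref{prop:deterministic-slope} on $\Omega_*$, the probability-$1/2$ bound from Proposition \ref{prop:proba-estimate-slope} to verify \eqref{rough_bound}, then Theorem \ref{cor3} with $\sqrt{a+b}\le\sqrt a+\sqrt b$ applied to the minimizing $s,\vbeta$) reproduce that template faithfully. No gaps.
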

%\begin{proof}
  The proof is similar to that of Theorem \ref{thm:lasso},
    and thus it is omitted. Remarks analogous to the discussion after Theorem \ref{thm:lasso} apply here as well.
%\end{proof}

\section{Generalizations and extensions}\label{s:Generalizations}

The list of applications of Theorem~\ref{cor3} considered in the previous sections can be further extended. For instance, the same techniques can be applied when, instead of prediction by $\design \vbeta$ for $\vf$, one uses a trace regression prediction. In this case, the estimator $\hbeta\in\R^{m_1\times m_2}$ is a matrix satisfying
\begin{equation}
    \hbeta(\vy) \in \argmin_{\vbeta\in\R^{m_1\times m_2}} \left(\frac1n \sum_{i=1}^n (y_i - {\rm trace}(X_i^T\vbeta))^2 + 2 F(\vbeta)\right),
    \label{eq:def-hbeta-trace-reg}
\end{equation}
where $X_1,...,X_n$ are given deterministic matrices in  $\R^{m_1\times m_2}$, $F:\R^{m_1\times m_2}\to \R$ is a convex penalty. A popular example of $F(\vbeta)$ in this context is the nuclear norm of $\vbeta$.  The methods of this paper apply for such an estimator as well, and we obtain analogous bounds. Indeed, \eqref{eq:def-hbeta-trace-reg} can be rephrased as \eqref{eq:def-hbeta-pen} by vectorizing $\vbeta$ and defining a new matrix~$\design$. Thus, Theorem~\ref{cor3} can be applied. Next, note that the examples of application of Theorem~\ref{cor3} considered above required only two ingredients: a deterministic oracle inequality and a weak bound on the probability of the corresponding random event.  The deterministic bound is obtained here quite analogously to the previous sections or following the same lines as in \cite{koltchinskii2011nuclear} or in \cite[Corollary 12.8]{vdgeer}. A bound on the probability of the random event can be also borrowed from \cite{koltchinskii2011nuclear}.  We omit further details. 

Finally, we observe that Proposistion~\ref{prop3} and Theorem~\ref{cor3} generalize to Hilbert space setting. 
Let $H, H'$ be two Hilbert spaces and $\design : H'\to H$ a bounded linear operator. If $H$ is equipped with a norm $\|\cdot\|_H$, and $F:H'\to [0,+\infty]$ is a proper convex function, consider for any $\vy\in H$ a solution
\begin{equation}
    \hbeta(\vy) \in \argmin_{\vbeta\in H'} \left(\|\vy - \design \vbeta\|_H^2 + 2 F(\vbeta)\right).
    \label{eq:def-hbeta-hilbert}
\end{equation}
\begin{proposition}
Under the above assumptions, any solution  $\hbeta(\vy)$ of \eqref{eq:def-hbeta-hilbert} satisfies $\|\design (\hbeta(\vy)-\hbeta(\vy'))\|_H\le \|\vy-\vy'\|_H$. 
\end{proposition}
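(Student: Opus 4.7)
The plan is to adapt the proof of Proposition~\ref{prop3} essentially verbatim, replacing the transpose $\design^T$ by the Hilbert-space adjoint $\design^*$ (which exists and is bounded since $\design$ is a bounded linear operator between Hilbert spaces) and invoking standard subdifferential calculus for proper convex functions on a Hilbert space. The argument relies on only three ingredients, each of which is available in the Hilbert-space setting: (i) the sum rule for subdifferentials, (ii) monotonicity of $\partial F$, and (iii) Cauchy-Schwarz in $H$.

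First, I would write the first-order optimality condition for \eqref{eq:def-hbeta-hilbert}. The quadratic term $\vbeta\mapsto \|\vy-\design\vbeta\|_H^2$ is Fr\'echet-differentiable everywhere with gradient $2\design^*(\design\vbeta-\vy)\in H'$, so the Moreau--Rockafellar sum rule (which extends from the finite-dimensional statement cited in Proposition~\ref{prop3} to general Hilbert spaces precisely because one summand is everywhere differentiable) yields the existence of $\vh\in\partial F(\hbeta(\vy))$ and $\vh'\in\partial F(\hbeta(\vy'))$ with
\begin{equation*}
\design^*(\design\hbeta(\vy)-\vy)+\vh=0,\qquad \design^*(\design\hbeta(\vy')-\vy')+\vh'=0.
\end{equation*}

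Next, set $\vdelta=\hbeta(\vy)-\hbeta(\vy')\in H'$ and subtract the two optimality conditions to get $\design^*\design\vdelta-\design^*(\vy-\vy')=\vh'-\vh$. Taking the inner product in $H'$ with $\vdelta$ and using monotonicity of the subdifferential of the proper convex function $F$, namely $\langle \vh-\vh',\vdelta\rangle_{H'}\ge 0$, leads to
\begin{equation*}
\|\design\vdelta\|_H^2=\langle \design^*(\vy-\vy'),\vdelta\rangle_{H'}+\langle \vh'-\vh,\vdelta\rangle_{H'}\le \langle \vy-\vy',\design\vdelta\rangle_H,
\end{equation*}
where the first equality uses the adjoint identity $\langle \design^*\design\vdelta,\vdelta\rangle_{H'}=\|\design\vdelta\|_H^2$. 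Applying Cauchy--Schwarz in $H$ and dividing by $\|\design\vdelta\|_H$ (the case $\design\vdelta=0$ being trivial) then yields the claimed bound $\|\design(\hbeta(\vy)-\hbeta(\vy'))\|_H\le \|\vy-\vy'\|_H$.

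The only mildly delicate step is (i), the subdifferential sum rule in the Hilbert-space setting; however, since the data-fit term is finite and Fr\'echet-differentiable everywhere on $H'$, the standard qualification conditions for the sum rule are automatically met and $\partial(\|\vy-\design\cdot\|_H^2+2F)(\vbeta)=2\design^*(\design\vbeta-\vy)+2\partial F(\vbeta)$. Monotonicity of $\partial F$ is a textbook property of proper convex functions on any Hilbert space. All other manipulations are formally identical to those in the proof of Proposition~\ref{prop3}, so no further difficulties arise.
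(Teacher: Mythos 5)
Your proof is correct and follows exactly the route the paper takes: the paper's own proof simply states that the argument of Proposition~\ref{prop3} carries over verbatim to the Hilbert-space setting (replacing $\design^T$ by the adjoint $\design^*$ and using that the subdifferential sum rule and monotonicity hold for proper convex functions on a Hilbert space, citing \cite{peypouquet2015convex}), which is precisely what you have written out in detail.
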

The proof of this proposition is completely analogous to that of Proposistion~\ref{prop3}. It suffices to note that the properties of convex functions used in the proof of Proposistion~\ref{prop3} are valid when these functions are defined on a Hilbert space, cf. \cite{peypouquet2015convex}.
This and the fact that the Gaussian concentration property extends to Hilbert space valued Gaussian variables \cite[Theorem 6.2]{lifshits} immediately imply a Hilbert space analog of Theorem~\ref{cor3}.

%\begin{acknowledgement}
%    The authors would like to thank...
%\end{acknowledgement}

\bibliographystyle{plainnat}
\bibliography{konakov}

\begin{thebibliography}{17}
\providecommand{\natexlab}[1]{#1}
\providecommand{\url}[1]{\texttt{#1}}
\expandafter\ifx\csname urlstyle\endcsname\relax
  \providecommand{\doi}[1]{doi: #1}\else
  \providecommand{\doi}{doi: \begingroup \urlstyle{rm}\Url}\fi

\bibitem[Alekseev et~al.(1987)Alekseev, Tikhomirov, and Fomin]{ATF}
V.M. Alekseev, V.M. Tikhomirov, and S.V. Fomin.
\newblock \emph{Optimal Control}.
\newblock Consultants Bureau, New York, 1987.

\bibitem[Bellec et~al.(2016)Bellec, Lecu{\'e}, and Tsybakov]{bellec2016slope}
P.~C. Bellec, G.~Lecu{\'e}, and A.~B. Tsybakov.
\newblock Slope meets lasso: improved oracle bounds and optimality.
\newblock \emph{arXiv preprint arXiv:1605.08651}, 2016.

\bibitem[Bickel et~al.(2009)Bickel, Ritov, and
  Tsybakov]{bickel2009simultaneous}
P.~J. Bickel, Y.~Ritov, and A.~B. Tsybakov.
\newblock Simultaneous analysis of {L}asso and {D}antzig selector.
\newblock \emph{Ann. Statist.}, 37\penalty0 (4):\penalty0 1705--1732, 2009.

\bibitem[Bogdan et~al.(2015)Bogdan, van~den Berg, Sabatti, Su, and
  Cand{\`e}s]{bogdan}
M.~Bogdan, E.~van~den Berg, C.~Sabatti, W.~Su, and E.~J. Cand{\`e}s.
\newblock S{LOPE}---adaptive variable selection via convex optimization.
\newblock \emph{Ann. Appl. Stat.}, 9\penalty0 (3):\penalty0 1103--1140, 2015.

\bibitem[B{\"u}hlmann and van~de Geer(2011)]{buhlmann2011statistics}
P.~B{\"u}hlmann and S.~van~de Geer.
\newblock \emph{Statistics for {H}igh-dimensional {D}ata: {M}ethods, {T}heory
  and {A}pplications}.
\newblock Springer, 2011.

\bibitem[Dalalyan et~al.(2014)Dalalyan, Hebiri, and
  Lederer]{dalalyan2014prediction}
A.~S. Dalalyan, M.~Hebiri, and J.~Lederer.
\newblock On the prediction performance of the lasso.
\newblock \emph{arXiv preprint arXiv:1402.1700}, 2014.

\bibitem[Giraud(2014)]{giraud2014introduction}
C.~Giraud.
\newblock \emph{Introduction to {H}igh-dimensional {S}tatistics}, volume 138.
\newblock CRC Press, 2014.

\bibitem[Hiriart-Urruty and Lemar{\'e}chal(1993)]{hiriart_lemarechal}
J.-B. Hiriart-Urruty and C.~Lemar{\'e}chal.
\newblock \emph{Convex Analysis and Minimization Algorithms I: Fundamentals}.
\newblock Springer, Berlin e.a., 1993.

\bibitem[Koltchinskii et~al.(2011)Koltchinskii, Lounici, and
  Tsybakov]{koltchinskii2011nuclear}
V.~Koltchinskii, K.~Lounici, and A.~B. Tsybakov.
\newblock Nuclear-norm penalization and optimal rates for noisy low-rank matrix
  completion.
\newblock \emph{Ann. Statist.}, 39\penalty0 (5):\penalty0 2302--2329, 2011.

\bibitem[Lifshits(2012)]{lifshits}
M.~Lifshits.
\newblock \emph{Lectures on {G}aussian {P}rocesses}.
\newblock Springer, 2012.

\bibitem[Lounici et~al.(2011)Lounici, Pontil, Tsybakov, and van~de
  Geer]{lptv2011}
K.~Lounici, M.~Pontil, A.B. Tsybakov, and S.~van~de Geer.
\newblock Oracle inequalities and optimal inference under group sparsity.
\newblock \emph{Ann. Statist.}, 39:\penalty0 2164 -- 2204, 2011.

\bibitem[Maurer and Pontil(2012)]{maurer_pontil}
A.~Maurer and M.~Pontil.
\newblock Structured sparsity and generalization.
\newblock \emph{Journal of Machine Learning Research}, 13:\penalty0 671 -- 690,
  2012.

\bibitem[Micchelli et~al.(2010)Micchelli, Morales, and Pontil]{micchelli}
C.~A. Micchelli, J.~M. Morales, and M.~Pontil.
\newblock A family of penalty functions for structured sparsity.
\newblock In \emph{Advances in {N}eural {I}nformation {P}rocessing {S}ystems,
  NIPS 2010}, volume~23, 2010.

\bibitem[Peypouquet(2015)]{peypouquet2015convex}
J.~Peypouquet.
\newblock \emph{Convex Optimization in Normed Spaces: Theory, Methods and
  Examples}.
\newblock Springer, 2015.

\bibitem[Tibshirani and Taylor(2012)]{tibshirani2012}
R.~J. Tibshirani and J.~Taylor.
\newblock Degrees of freedom in lasso problems.
\newblock \emph{Ann. Statist.}, 40\penalty0 (2):\penalty0 1198--1232, 2012.

\bibitem[van~de Geer(2016)]{vdgeer}
S.~van~de Geer.
\newblock \emph{Estimation and Testing under Sparsity}.
\newblock Springer, 2016.

\bibitem[van~de Geer and Wainwright(2015)]{vandegeer2015concentration}
S.~van~de Geer and M.~Wainwright.
\newblock On concentration for (regularized) empirical risk minimization.
\newblock \emph{arXiv preprint arXiv:1512.00677}, 2015.

\end{thebibliography}

\end{document}